\DeclareMathOperator{\Ad}{Ad}
\DeclareMathOperator{\ann}{ann}
\DeclareMathOperator{\diag}{diag}
\DeclareMathOperator{\Aut}{Aut}
\DeclareMathOperator{\Der}{Der}
\DeclareMathOperator{\Exp}{Exp}
\DeclareMathOperator{\Ric}{Ric}
\DeclareMathOperator{\scal}{scal}
\DeclareMathOperator{\SO}{SO}
\DeclareMathOperator{\SU}{SU}
\DeclareMathOperator{\tr}{tr}
\DeclareMathOperator{\ZD}{\mathit{ZD}}
\renewcommand{\Im}{\operatorname{Im}}
\newcommand{\mbb}{\mathbb}
\newcommand{\mf}{\mathfrak}
\newcommand{\mrm}{\mathrm}
\renewcommand{\Re}{\operatorname{Re}}
\newcommand{\so}{\mathfrak{so}}
\theoremstyle{plain}
\newtheorem{theorem}{Theorem}[section]
\newtheorem{lemma}[theorem]{Lemma}
\newtheorem{proposition}[theorem]{Proposition}
\theoremstyle{remark}
\newtheorem{remark}[theorem]{Remark}
\numberwithin{equation}{section}
\begin{document}

\title{The geometry of sedenion zero divisors}

\author{Silvio Reggiani}
\address{CONICET and Universidad Nacional de Rosario, ECEN-FCEIA,
  Departamento de Ma\-te\-má\-ti\-ca. Av. Pellegrini 250, 2000
  Rosario, Argentina.}
\email{\href{mailto:reggiani@fceia.unr.edu.ar}{reggiani@fceia.unr.edu.ar}}
\urladdr{\url{http://www.fceia.unr.edu.ar/~reggiani}}

\date{\today}


\keywords{Cayley-Dickson algebras, Sedenion algebra, Zero divisors, Einstein manifolds, Non-negative curvature}

\subjclass[2020]{53C30, 17A20}

\begin{abstract}
  The sedenion algebra $\mathbb S$ is a non-commutative, non-associative, $16$\mbox{-}\nobreak\hspace{0pt}dimensional real algebra with zero divisors. It is obtained from the octonions through the Cayley-Dickson construction. The zero divisors of $\mathbb S$ can be viewed as the submanifold $\mathcal Z(\mathbb S) \subset \mathbb S \times \mathbb S$ of normalized pairs whose product equals zero, or as the submanifold $\operatorname{\mathit {ZD}}(\mathbb S) \subset \mathbb S$ of normalized elements with non-trivial annihilators. We prove that $\mathcal Z(\mathbb S)$ is isometric to the excepcional Lie group $G_2$, equipped with a naturally reductive left-invariant metric. Moreover, $\mathcal Z(\mathbb S)$ is the total space of a Riemannian submersion over the excepcional symmetric space of quaternion subalgebras of the octonion algebra, with fibers that are locally isometric to a product of two round $3$-spheres with different radii. Additionally, we prove that $\operatorname{\mathit {ZD}}(\mathbb S)$ is isometric to the Stiefel manifold $V_2(\mathbb R^7)$, the space of orthonormal $2$-frames in $\mathbb R^7$, endowed with a specific $G_2$-invariant metric. By shrinking this metric along a circle fibration, we construct new examples of an Einstein metric and a family of homogenous metrics on $V_2(\mathbb R^7)$ with non-negative sectional curvature.
\end{abstract} 

\maketitle

\section{Introduction}

The Cayley-Dickson algebras form a sequence of real algebras $\mathbb{A}_n$, defined recursively beginning with $\mathbb{R}$ and doubling in dimension with each iteration. The first members of this family are the familiar real division algebras: $\mbb R = \mbb A_0$, $\mbb C = \mbb A_1$, $\mbb H = \mbb A_2$ and $\mbb O = \mbb A_3$. The next algebra in this sequence is the so-called sedenion algebra $\mbb S = \mbb A_4$, which is often overlooked in comparison to its lower-dimensional relatives due to its lack of certain desirable algebraic properties. Nonetheless, this somewhat enigmatic algebra has long intrigued mathematicians and has recently found applications in fields such as theoretical physics \cite{gillardThreeFermionGenerations2019} and machine learning \cite{saoudMetacognitiveSedenionValuedNeural2020}.

Since $\mbb S$ is not a division algebra, it is interesting to understand the structure of its zero divisors. The topology of the sedenion zero divisors is described by the principal bundle 
\begin{equation*}
  \SU(2) \to G_2 \to V_2(\mbb R^7),
\end{equation*}
where $G_2$ is the excepcional compact Lie group of rank $2$ and $V_2(\mbb R^7)$ is the Stiefel manifold of orthonormal $2$-frames in $\mbb R^7$. Specifically, $G_2$ is homeomorphic to the submanifold $\mathcal Z(\mbb S) \subset \mbb S \times \mbb S$ of normalized sedenion pairs that multiply to zero; $V_2(\mbb R^7)$ is homeomorphic to the submanifold $\ZD(\mbb S) \subset \mbb S$ of sedenions with norm $\sqrt 2$ that have non-trivial annihilators; and, for each $u \in \ZD(\mbb S)$, the fiber $\SU(2)$ corresponds to the sphere of the annihilator subspace of $u$ (see \cite{morenoZeroDivisorsCayleyDickson1998,bissLargeAnnihilatorsCayley2008}). However, little is known regarding the geometry of the sedenion zero divisors.

Both $\mathcal Z(\mbb S)$ and $\ZD(\mbb S)$ carry a natural geometry as submanifolds of $\mbb R^{32}$ and $\mbb R^{16}$, respectively. Furthermore, since the zero divisors of $\mbb S$ are invariant under $\Aut(\mbb S)$, whose connected component is isomorphic to $G_2$, it follows that $\mathcal Z(\mbb S)$ and $\ZD(\mbb S)$ are homogeneous submanifolds. In this article, we study the intrinsic geometry of the zero divisors of $\mbb S$. First, we prove that $\mathcal Z(\mbb S)$ is isometric to $G_2$ with a naturally reductive left-invariant metric, forming the total space of a Riemannian submersion over the exceptional symmetric space $G_2 /{\SO(4)}$, with fibers locally isometric to a product of two round $3$-spheres with different radii.

Next, we analyze the geometry of $\ZD(\mbb S)$, which is isometric to $V_2(\mbb R^7) = G_2 /{\SU(2)}$ with a particular $G_2$-invariant metric. At first glance, the geometry of $\ZD(\mbb S)$ does not seem very interesting; however, by shrinking the metric along a certain circle fibration, we obtain a family $g_r$ ($r > 0$) of $G_2$-invariant metrics on $V_2(\mathbb{R}^7)$, where $g_{\frac{2}{3}}$ represents the original metric. This process reveals several distinguished examples. Specifically, we prove, among other things, that $(V_2(\mbb R^7), g_r)$:
\begin{itemize}
  \item has positive scalar curvature if and only of $r < \frac{20}3$;
  \item is an Einstein manifold if and only if $r = \frac59$;
  \item has non-negative sectional curvature if and only if $0 < r \le \frac49$.
\end{itemize}

These results are quite remarkable, as Einstein metrics and metrics with non-negative curvature are very rare. To the best of our knowledge, the examples presented in this article are new. The known homogeneous Einstein metrics on $V_2(\mathbb R^7) = G_2 / {\SU(2)} = \SO(7) / {\SO(5)}$ are limited to the unique $\SO(7)$-invariant Einstein metric discovered by Sagle and the so-called Jensen metrics (see \cite{sagleHomogeneousEinsteinManifolds1970,jensenEinsteinMetricsPrincipal1973,backEquivariantGeometryKervaire1987,kerrNewExamplesHomogeneous1998}). It is worth noticing that the metric $g_{\frac59}$ is neither $\SO(7)$-invariant nor a Jensen metric. Regarding metrics with non-negative sectional curvature, we refer to the survey \cite{zillerExamplesRiemannianManifolds2007}. Typically, examples of homogeneous metrics with non-negative curvature appear as normal homogeneous metrics or are constructed through a Cheeger deformation of a metric already known to have non-negative curvature. Recall that none of the metrics $g_r$ is normal homogeneous (nor even naturally reductive) and that the initial metric $g_{\frac23}$ does not possess non-negative sectional curvature.

Let us comment briefly on the proof our main results. In order to study the geometry of $\mathcal Z(\mbb S)$, it is necessary to ``fix an origin'' so that the metric can be identified with a left-invariant metric on $G_2$. Any choice of such an origin for $\mathcal Z(\mbb S)$ leads to isometric metrics on $G_2$, but a well-chosen origin can greatly simplify computations. We select the origin from among the so-called 84 standard zero divisors of $\mbb S$. Then, using the results in \cite{datriNaturallyReductiveMetrics1979}, we show that $\mathcal Z(\mbb S)$ is a naturally reductive space. A similar approach applies to the study of $\ZD(\mbb S)$ with the metric $g_r$. Here, we select another standard zero divisor (different from the previous one) so that the isotropy subgroup of $G_2$ acts trivially on the usual subalgebra
$\mbb H \subset \mbb S$. This choice allows the metric $g_r$ to be expressed in diagonal form with respect to the normal homogeneous metric, making it possible to derive a nice expression for the Ricci tensor of $g_r$.

The most challenging part is to determine the sign of the sectional curvatures of $g_r$. Since there is no manageable expression for the curvature (as there is for naturally reductive spaces), using algebraic manipulation proves to be nearly impossible. Indeed, the sectional curvature function $F_r$ of $g_r$ can be interpreted as a homogeneous polynomial of degree $4$ in $22$ real variables, which, for a generic $r$, has 285 non-trivial coefficients. To show that $F_r$ is non-negative for $0 < r \le \frac49$, we reduce the problem to proving that both $F_0$ (the formal extension of $F_r$ at $r = 0$) and $F_{\frac49}$ are non-negative. By using convex optimization techniques, we are able to prove the stronger result that $F_0$ and $F_{\frac49}$ are polynomial sums of squares. 

Finally we want to mention that the computations required in the proof of some of our results are often cumbersome and were computer checked using the software SageMath. The code used to verify our results is available at \cite{reggianiAuxiliaryComputationsArticle2024}. 

We believe this work shows that the study of the geometry of Cayley-Dickson algebras, particularly regarding their zero divisors, deserves further attention, as it may have interesting implications in differential geometry of compact homogeneous spaces.

\subsection*{Acknowledgements}

This work is supported by CONICET and partially supported by SeCyT-UNR and ANPCyT. The author would like to thank Andreas Arvanitoyeorgos for helpful discussions on homogeneous Einstein metrics on Stiefel manifolds.

\section{Preliminaries and notation}

The main references for this section are \cite{morenoZeroDivisorsCayleyDickson1998,bissLargeAnnihilatorsCayley2008} on Cayley-Dickson algebras and their zero divisors, \cite{arvanitogeorgosIntroductionLieGroups2003} on the geometry of homogeneous spaces and \cite{datriNaturallyReductiveMetrics1979} on naturally reductive left-invariant metrics on compact Lie groups. Observe that in this section, as well as throughout the rest of the article, we start counting indices from 0.

\subsection{Cayley-Dickson algebras}

The Cayley-Dickson algebras $\mbb A_n$ are a family of real algebras, equipped  with an involution $a \mapsto a^*$ (also called conjugation), which are recursively defined starting from $\mbb A_0 = \mbb R$, where $a^* = a$. Each subsequent algebra is defined by setting $\mbb A_n = \mbb A_{n-1} \times \mbb A_{n-1}$ as a vector space, with multiplication given by
\begin{equation*}
  (a, b)(c, d) = (ac - d^*b, da + bc^*)  
\end{equation*}
and involution defined by
\begin{equation*}
  (a, b)^* = (a^*, -b).
\end{equation*}

Notice that the inclusion $a \mapsto (a, 0)$ is a monomorphism of algebras from $\mbb A_{n-1}$ into $\mbb A_n$ for all $n \ge 0$. It is well known that the first four algebras in the Cayley-Dickson construction are the real division algebras $\mbb R$, $\mbb C$, $\mbb H$ and $\mbb O$, respectively. It is also known that the Cayley-Dickson algebras lose some important properties with each iteration. For example, $\mbb A_n$ is commutative if and only if $n \le 1$, associative if and only if $n \le 2$; \emph{alterative} (i.e., $x(xy) = (xx)y$ and $(xy)y = x(yy)$ for all $x, y \in \mbb A_n$) if and only of $n \le 3$. On the other hand, every Cayley-Dickson algebra is \emph{flexible} (i.e., $x(yx) = (xy)x$ for all $x, y \in \mbb A_n$) and \emph{power associative} (i.e., $x^k$ is well defined for all $x \in \mbb A_n$ and $k \in \mbb N$).

For $x \in \mbb A_n$ we define its real and imaginary parts as $\Re x = \frac12 (x + x^*)$ and $\Im x = \frac12 (x - x^*)$, respectively. We say that $x$ is \emph{real} (resp.\ \emph{imaginary}) if $\Im x = 0$ (resp.\ $\Re x = 0$). Thus, one can recover the usual inner product on $\mbb A_n \simeq \mbb R^{2^n}$ by 
\begin{equation*}
  \langle x, y \rangle = \Re(x y^*).
\end{equation*}

In $\mbb A_n$, one has that $\|x\|^2 = xx^*$ for all $x$. However, the identity $\| x y \| = \|x\| \|y\|$ does not hold in general if $n \ge 4$. Recall that $\mbb A_n$ is a division algebra if and only $n \le 3$. If $n \ge 4$, then $\mbb A_n$ has zero divisors. Since, $xy = 0$ implies $yx = 0$, the left and right zero divisors of $\mbb A_n$ coincide. Thus, an element $0 \neq u \in \mbb A_n$ is a \emph{zero divisor} if and only if $\ann u \neq 0$, where $\ann u$ is the kernel of the $\mbb R$-linear map $L_u: \mbb A_n \to \mbb A_n$ given by $L_u(x) = ux$. In \cite{morenoZeroDivisorsCayleyDickson1998}, it is proven that a zero divisor $u$ must be imaginary and $\dim(\ann u) \equiv 0 \mod 4$. Furthermore, in \cite{bissLargeAnnihilatorsCayley2008} it is proven that $\dim \ann u \le 2^n -4n + 4$. One can study the zero divisors globally by defining the sets 
\begin{align*}
  \mathcal Z(\mathbb A_n) & = \{(u, v) \in \mbb A_n \times \mbb A_n : \|u\| = \|v\| = \sqrt 2 \text{ and } uv = 0\}, \\
  \ZD(\mbb A_n) & = \{u \in \mbb A_n: (u, v) \in \mathcal Z(\mbb A_n) \text{ for some } v \in \mbb A_n\}.
\end{align*}
Normalizing the zero divisors to $\sqrt 2$ is not particularly important, but it will be convenient later. When $n \ge 5$, the sets $\ZD_k(\mbb A_n) = \{u \in \ZD(\mbb A_n): \dim(\ann u) = k\}$ are also of interest.

For $n \ge 4$, one has that the automorphism group of $\mbb A_n$ is given by
\begin{equation*}
  \Aut(\mbb A_n) \simeq \Aut(\mbb A_{n-1}) \times S_3 \simeq G_2 \times (S_3)^{n-3},
\end{equation*}
where $S_3$ is the symmetric group in three elements, and $G_2 = \Aut(\mbb O)$ is the $14$\mbox{-}\nobreak\hspace{0pt}dimensional compact simple Lie group of rank $2$. Recall that $G_2$ acts diagonally on $\mbb A_n$. It follows that $\Der(\mbb A_n) = \mf g_2$ for all $n \ge 4$, where $\mf g_2$ is the Lie algebra of $G_2$.

\subsection{Sedenion zero divisors}

From now on we denote the \emph{sedenion algebra} $\mbb A_4$ by $\mbb S$. Let us denote by $e_0, \ldots, e_{15}$ the canonical basis of $\mbb S$. By making an abuse of notation, we also denote by $e_0, \ldots, e_3$ and $e_0, \ldots, e_7$ the canonical basis of $\mbb H$ and $\mbb O$ respectively. The zero divisors of $\mbb S$ have the following form.

\begin{proposition}[See \cite{bissLargeAnnihilatorsCayley2008}]
  \label{prop:zero-divisors}
  An element $(a, b) \in \mbb S$ is a zero divisor if and only if $a, b$ are imaginary elements of $\mbb O$ such that $\|a\| = \|b\| \neq 0$ and $a \perp b$.
\end{proposition}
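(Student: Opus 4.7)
The plan is to prove both implications using the sedenion product formula $(a,b)(c,d) = (ac - d^*b,\, da + bc^*)$ together with the fact that $\mathbb{O}$ is a normed alternative composition algebra.

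For the direction $(\Leftarrow)$, suppose $a, b \in \Im \mathbb{O}$ are orthogonal with $\|a\| = \|b\| \ne 0$; normalize so that $\|a\| = \|b\| = 1$. The subspace $\mathbb{H}_{a,b} := \mathrm{span}_{\mathbb{R}}\{1, a, b, ab\} \subset \mathbb{O}$ is then a quaternion subalgebra, and $\mathbb{O} = \mathbb{H}_{a,b} \oplus \mathbb{H}_{a,b}\,e$ is the standard Cayley--Dickson decomposition for any unit $e \in \mathbb{H}_{a,b}^{\perp} \subset \Im \mathbb{O}$. Take $(c,d) := ((ab)\,e,\, e) \ne 0$; I claim $(c, d)$ annihilates $(a, b)$. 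Using the doubling identity $p\,(q e) = (qp)\,e$ for $p,q \in \mathbb{H}_{a,b}$ and the quaternionic computation $(ab)\,a = b$ (from associativity in $\mathbb{H}_{a,b}$ together with $ab = -ba$), one finds $ac = a((ab)e) = ((ab)a)\,e = be = d^*b$, the last equality because $b \perp d$ forces $bd = -db$ and $d^* = -d$. A parallel calculation, using $(ab)b = -a$ and the fact that $c \in \mathbb{H}_{a,b}^{\perp} \subset \Im\mathbb{O}$ so $c^* = -c$, gives $bc^* = -b((ab)e) = -((ab)b)\,e = ae = -da$, whence $da + bc^* = 0$. Thus $(a,b)(c,d) = 0$ and $(a,b) \in \ZD(\mathbb{S})$.

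For the direction $(\Rightarrow)$, suppose $(a,b)(c,d) = 0$ with $(c,d) \ne 0$. The vanishing of the two components of the product gives $ac = d^*b$ and $da = -bc^*$. Taking norms in $\mathbb{O}$ produces $\|a\|\|c\| = \|b\|\|d\|$ and $\|a\|\|d\| = \|b\|\|c\|$. An elementary case analysis, using that $\mathbb{O}$ has no zero divisors, rules out any of $a, b, c, d$ being zero, and multiplying the two norm relations then forces $\|a\| = \|b\|$ and $\|c\| = \|d\|$. By the general theorem recalled in Section 2, every zero divisor of a Cayley--Dickson algebra is purely imaginary; since $(a,b)^* = (a^*,-b)$, this forces $\Re a = 0$. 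Applying the same fact to $(c,d)$---which is itself a zero divisor because $xy = 0 \iff yx = 0$ in $\mathbb{A}_n$---gives $\Re c = 0$.

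To finish, I plan to show that $b$ is imaginary and $a \perp b$ by taking real parts and inner-product projections of the two equations $ac = d^*b$ and $da = -bc^*$, exploiting the composition-algebra identities $\langle xy, zy\rangle = \langle x, z\rangle \|y\|^2$ and $\langle xy, z\rangle = \langle y, x^* z\rangle$ in $\mathbb{O}$, together with the vanishing of the real part of every associator in the alternative algebra $\mathbb{O}$. Pairing $ac = d^*b$ with $b$ gives $\langle ac, b\rangle = \Re(d)\|b\|^2$, while expanding $\langle ac, b\rangle$ using $a, c \in \Im\mathbb{O}$ expresses it as a linear combination of $\Re(b)\langle a, c\rangle$ and $\langle \Im b,\, a \times c\rangle$. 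Performing the symmetric manipulation on $da = -bc^*$ furnishes a parallel relation for $\Re(d)$, and together these force $\Re b = \Re d = 0$. Finally, pairing $ac = d^*b$ with $a$ and using $\Re a = 0$ yields $\langle a, b\rangle = 0$. The main obstacle is precisely this last analytic step: controlling the non-associativity of $\mathbb{O}$ carefully enough to turn the scalar projections of the two product equations into clean linear conditions on $\Re b$ and $\langle a, b\rangle$ rather than into mere consistency relations.
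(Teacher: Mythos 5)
The paper offers no proof of this proposition---it is imported verbatim from the cited reference---so your argument stands or falls on its own. The direction $(\Leftarrow)$ is complete and correct: for orthonormal imaginary $a,b$ the pair $(c,d)=((ab)e,\,e)$, with $e$ a unit vector orthogonal to the quaternion subalgebra $\mathbb H_{a,b}$, does annihilate $(a,b)$, and the doubling identities you use are valid for any such decomposition of $\mathbb O$. In $(\Rightarrow)$, the nonvanishing of $a,b,c,d$, the norm equalities, and $\Re a=\Re c=0$ (via Moreno's theorem, which the paper recalls, applied to both $(a,b)$ and $(c,d)$) are all fine.

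The gap is exactly where you flag it, and the specific final pairing you propose does not close it. Pairing $ac=d^*b$ with $a$ gives $\langle ac,a\rangle=\langle c,a^*a\rangle=\|a\|^2\Re(c)=0$ on the left and $\langle d^*b,a\rangle=\langle b,da\rangle=\langle b,bc\rangle=\|b\|^2\Re(c)=0$ on the right (using the second equation), i.e.\ the tautology $0=0$; it does not produce $\langle a,b\rangle=0$. Here is how to finish. Since $c^*=-c$, the two equations read $ac=d^*b$ and $da=bc$. Pairing the second with $a$ gives $\langle da,a\rangle=\langle d,aa^*\rangle=\|a\|^2\Re(d)$, while $\langle bc,a\rangle=\langle b,ac^*\rangle=-\langle ac,b\rangle=-\langle d^*b,b\rangle=-\|b\|^2\Re(d)$; since $\|a\|=\|b\|\neq 0$ this forces $\Re(d)=0$, and the same computation applied to the relation $(c,d)(a,b)=0$ (which follows by conjugating the first equation and reusing the second) yields $\Re(b)=0$. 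For orthogonality, apply the composition identity $\langle xz,yz\rangle=\langle x,y\rangle\|z\|^2$ to both equations simultaneously: $\|c\|^2\langle a,b\rangle=\langle ac,bc\rangle=\langle -db,\,da\rangle=-\|d\|^2\langle b,a\rangle$, whence $(\|c\|^2+\|d\|^2)\langle a,b\rangle=0$ and $a\perp b$. With these two substitutions your outline becomes a complete proof; note that the key is to pair the two product equations \emph{against each other}, not each against $a$ or $b$ separately.
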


From this result, one can construct the $84$ \emph{standard zero divisors} of $\mbb S$. Namely, the elements of the form $(e_i + e_j, e_k \pm e_l) \in \mathcal Z(\mbb S)$ such that $1 \le i \le 6$, $9 \le j \le 15$, $i < k \le 7$ and $9 \le l \le 15$ (see Table \ref{tab:standard-84-zer-div}). Clearly, every automorphism of $\mbb S$ maps $\mathcal Z(\mbb S)$ into itself. Moreover, we have that the connected component of $\Aut(\mbb S)$ acts simply and transitively on $\mathcal Z(\mbb S)$:

\begin{theorem}[\cite{morenoZeroDivisorsCayleyDickson1998}]
  \label{thm:moreno}
   $\mathcal{Z}(\mbb S)$ is homeomorphic (and moreover, diffeomorphic) to $G_2$. 
\end{theorem}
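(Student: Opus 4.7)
The strategy is to show that the diagonal action of $G_2 = \Aut(\mbb O)$ on $\mbb S = \mbb O \times \mbb O$ restricts to a free and transitive action on $\mathcal Z(\mbb S)$; the orbit map through any chosen base point will then furnish the desired equivariant diffeomorphism $G_2 \to \mathcal Z(\mbb S)$. Because every automorphism of $\mbb S$ preserves the product, the involution and the norm, the diagonal $G_2$-action does indeed preserve $\mathcal Z(\mbb S)$.

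Fix as origin a standard zero divisor $z_0 = (u_0, v_0) \in \mathcal Z(\mbb S)$, chosen so that the octonionic components of $u_0 = (a_0, b_0)$ sit inside the standard quaternion subalgebra $\mbb H \subset \mbb O$. Transitivity is then established in two stages. First, $G_2$ acts transitively on $\ZD(\mbb S)$: by Proposition \ref{prop:zero-divisors}, an element $u = (a, b) \in \ZD(\mbb S)$ is determined by an orthonormal pair of unit imaginary octonions $(a, b)$, which extends to the quaternionic triple $(a, b, ab)$ generating a quaternion subalgebra $\mbb H_u \subset \mbb O$. The classical transitivity of $G_2$ on such ordered triples, with stabilizer isomorphic to $\SU(2)$ fixing $\mbb H_u$ pointwise, yields the claim and realizes $\ZD(\mbb S) = G_2 / \SU(2) = V_2(\mbb R^7)$. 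Second, the fiber of $\mathcal Z(\mbb S) \to \ZD(\mbb S)$ over $u_0$ is the unit $\sqrt{2}$-sphere in the $4$-dimensional subspace $\ann u_0 \subset \mbb S$. Unpacking the Cayley-Dickson formula, any element $(c, d) \in \ann u_0$ must have both octonionic components in $\mbb H^\perp$ (otherwise one would obtain a non-trivial zero divisor inside the division algebra $\mbb O$), and the remaining constraints reduce to a single quaternionic relation of the form $c = d (a_0 b_0)$. This identifies $\ann u_0$ with $\mbb H^\perp \cong \mbb H$ as a module over the isotropy $K \cong \SU(2)$, and since $K$ acts on $\mbb H^\perp$ as the group of unit quaternions, its action on the unit sphere is transitive and free.

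Combining both stages, the stabilizer of $z_0$ in $G_2$ is trivial, so the smooth equivariant orbit map $\phi \mapsto \phi \cdot z_0$ is a bijection from the compact Lie group $G_2$ onto the Hausdorff manifold $\mathcal Z(\mbb S)$; such a map is automatically a diffeomorphism, which proves the theorem.

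The principal obstacle is the second stage: pinning down the explicit structure of $\ann u_0$ and verifying that the $\SU(2)$-isotropy acts transitively on its unit $3$-sphere. The nonassociativity of $\mbb S$ prevents routine linear-algebraic arguments, so one must carefully exploit the alternativity of $\mbb O$ and the special placement of $a_0, b_0$ inside $\Im \mbb H$ to reduce the vanishing $(a_0, b_0)(c, d) = (a_0 c - d^* b_0,\, d a_0 + b_0 c^*) = 0$ to the quaternionic relation above. Selecting $u_0$ within the standard $\mbb H \subset \mbb O$ keeps this reduction as transparent as possible.
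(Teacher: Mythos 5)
Your proposal is correct and follows essentially the route the paper intends: the theorem is quoted from Moreno without proof, but the simply transitive $G_2$-action you construct --- transitivity on $\ZD(\mbb S)\simeq V_2(\mbb R^7)$ via Proposition~\ref{prop:zero-divisors}, together with the simply transitive action of the $\SU(2)$-isotropy on the $3$-sphere of $\ann u_0\subset \mbb H^\perp\times\mbb H^\perp$ --- is precisely the bundle structure $\SU(2)\to G_2\to V_2(\mbb R^7)$ that the paper records immediately after the statement. The annihilator computation you flag does close: projecting $(a_0,b_0)(c,d)=0$ onto the octonion subalgebra $\mbb H\times\mbb H\subset\mbb S$ forces $c,d\in\mbb H^\perp$, and the two remaining conditions collapse (up to sign) to the single relation you describe, as one can confirm against the standard zero divisors in Table~\ref{tab:standard-84-zer-div}.
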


Given $(u_0, v_0) \in \mathcal Z(\mbb S)$, we have that $G_2 \cdot u_0 = \ZD(\mbb S)$. It is not difficult to see that the isotropy subgroup at $u_0$ is isomorphic to $\SU(2)$. Note that $G_2 / {\SU(2)}$ is diffeomorphic to the Stiefel manifold $V_2(\mbb R^7)$. In fact, every automorphism of $\mbb O$ is completely determined by its values $a, b, c$ at $e_1, e_2, e_4$ respectively. Here $(a, b, c)$ can be any triple of pairwise orthonormal imaginary octonions of norm 1 such that $ab \perp c$. Hence the map $(a, b, c) \mapsto (a, b)$ identifies with a transitive action of $G_2$ in $V_2(\mbb R^7)$, whose isotropy subgroup at $(e_1, e_2)$ are the octonion automorphism that act trivially on $\mathbb H$, and therefore are isomorphic to $\SU(2)$. Thus, the topology of the sedenion zero divisors is encoded by the principal bundle
\begin{equation*}
  \SU(2) \to G_2 \to V_2(\mbb R^7).
\end{equation*}

\subsection{The Lie algebra of \texorpdfstring{$G_2$}{G2}}\label{sec:lie-algebra-g2}

We think of the Lie group $G_2 = \Aut(\mbb O)$ as a subgroup of $\SO(8)$ in the natural way (since every automorphism of $\mbb O$ fixes $e_0$, we have that $G_2$ is actually a subgroup of $\SO(7)$, but we do not use this identification here). So, we have $\mf g_2$ as a subalgebra of $\mf{so}(8)$. Let us consider the bi-invariant metric $g_{\mathrm{bi}}$ induced by the inner product on $\mf g_2$, which we denote with the same symbol, given by
\begin{equation*}
  g_{\mrm{bi}}(X, Y) = -\tr(XY).
\end{equation*}

Let us denote by $E_{ij} \in \mf{so}(8)$, where $0 \le i < j \le 7$, the matrix such that $(E_{ij})_{ij} = -(E_{ij})_{ji} = -1$ and $(E_{ij})_{kl} = 0$ in any other case. We define
\begin{align*}
X_0 & = \tfrac12 (E_{45} + E_{67}), & X_7 & = \tfrac12 (E_{16} + E_{25}), \\
X_1 & = \tfrac12 (E_{46} - E_{57}), & X_8 & = -\tfrac12 (E_{15} - E_{26}), \\
X_2 & = \tfrac12 (E_{47} + E_{56}), & X_9 & = \tfrac12 (E_{14} + E_{27}), \\
X_3 & = -\tfrac{\sqrt 3}6 (2 E_{23} - E_{45} + E_{67}), & X_{10} & = \tfrac{\sqrt 3}6 (E_{16} - E_{25} + 2 E_{34}), \\
X_4 & = \tfrac{\sqrt 3}6 (2 E_{13} + E_{46} + E_{57}), & X_{11} & = \tfrac{\sqrt 3}6 (E_{17} + E_{24} + 2 E_{35}), \\
X_5 & = -\tfrac{\sqrt 3}6 (2 E_{12} - E_{47} + E_{56}), & X_{12} & = -\tfrac{\sqrt 3}6 (E_{14} - E_{27} - 2 E_{36}), \\
X_6 & = -\tfrac12 (E_{17} - E_{24}), & X_{13} & = -\tfrac{\sqrt 3}6 (E_{15} + E_{26} - 2 E_{37}). \\
\end{align*}

One can see that $X_0, \ldots, X_{13}$ is an orthonormal basis of $\mf g_2$ with respect to the bi-invariant metric. We will denote by $X^0, \ldots, X^{13}$ its dual basis. Define
\begin{align*}
  \mf k_0 = \bigoplus_{i=0}^2 \mbb R X_i, &&
   \mf m_0 = \bigoplus_{i=3}^5 \mbb R X_i, &&
   \mf m_1 = \bigoplus_{i=6}^9 \mbb R X_i, &&
   \mf m_2 = \bigoplus_{i=10}^{13} \mbb R X_i.
\end{align*}

We have that $\mf k_0$ and $\mf m_0$ are two subalgebras of $\mf g_2$ isomorphic to $\mf{so}(3)$ such that $[\mf k_0, \mf m_0] = 0$. Moreover, $\mf k_0 \oplus \mf m_0 \simeq \mf{so}(4)$ is the subalgebra of a maximal subgroup of $G_2$ isomorphic to $\SO(4)$ (cfr.\ \cite{burnessLengthDepthCompact2020}). Such subgroup preserves the orthogonal decomposition $\mbb O = \mbb H \oplus \mbb H^\bot$. Furthermore, the subgroup of $G_2$ with Lie algebra $\mf k_0$ is isomorphic to $\SU(2)$ and acts trivially on $\mbb H$. Recall that $G_2 / {\SO(4)}$, with the normal homogeneous metric, is the symmetric space of quaternion subalgebras of $\mbb O$.

\subsection{Homogeneous and naturally reductive spaces}

Let $G$ be a Lie group and $H$ be a compact subgroup of $G$. Let us denote by $\mf g$ and $\mf h$ the Lie algebras of $G$ and $H$, respectively. Assume that $G$ acts almost effectively on $M = G/H$ and that $M$ is endowed with a $G$-invariant metric $g$. Recall that every $X \in \mf g$ induces a Killing vector field $X^*$ on $M$ defined as $X^*_q = \frac d{dt}\big|_0 \Exp(tX) \cdot q$. The map $X \mapsto X^*$ from $\mf g$ into $\mf X(M)$ satisfies 
\begin{equation*}
  [X, Y]^* = -[X^*, Y^*].
\end{equation*}

Let us fix a reductive decomposition $\mf g = \mf h \oplus \mf m$ (i.e., $\mf m$ is an $\Ad(H)$-invariant subspace of $\mf g$ complementary to $\mf h$), which always exists since $H$ is compact. Assume that $H$ is the isotropy subgroup of $p \in M$. Then we can identify $\mf m \simeq T_pM$. The geometry of $M$ is determined by an $\Ad(H)$-invariant inner product on $\mf m$, which we also denote by $g$, defined such that the map $X \in \mathfrak m \mapsto X^*_p \in T_pM$ is a linear isometry. With this setting, we can compute the Levi-Civita connection of $M$ as 
\begin{equation} \label{eq:LC-connection}
(\nabla_{X^*}Y^*)_p = -\frac12 [X, Y]_{\mf m} + U(X, Y), \qquad X, Y \in \mf m,  
\end{equation}
where $U$ is the algebraic tensor on $\mf m$ given by
\begin{equation*}
  2 g(U(X, Y), Z) = g([Z, X]_{\mf m}, Y) + g(X, [Z, Y]_{\mf m}), \qquad X, Y, Z \in \mf m. 
\end{equation*}

Let $R_{X, Y} = \nabla_{[X, Y]} - [\nabla_X, \nabla_Y]$ be the curvature tensor of $M$. The sectional curvature of $M$ is determined by
\begin{align*}
  g(R_{X, Y} X, Y) = & -\frac34 \|[X, Y]_{\mf m}\|^2 - \frac12 g([X, [X, Y]_{\mathfrak m} ]_{\mathfrak m}, Y) -\frac12 g([Y, [Y, X]_{\mathfrak m} ]_{\mathfrak m}, X) \\
  & + \|U(X, Y)\|^2 - g(U(X, X), U(Y, Y)) + g(Y, [ [X, Y]_{\mathfrak h}, X]_{\mathfrak m})
\end{align*}
for $X, Y \in \mf m$. Also, the Ricci tensor of $M$ is determined by   
\begin{align}
  \Ric(X, X) = & -\frac12 \sum_i \{\| [X, X_i]_{\mathfrak m}\|^2 + g([X, [X, X_i]_{\mathfrak m} ]_{\mathfrak m}, X_i) + 2 g([ X, [X, X_i]_{\mathfrak h} ]_{\mathfrak m}, X_i)\} \notag \\
   & + \frac14 \sum_{i, j} g([X_i, X_j]_{\mathfrak m}, X)^2 
  - g([Z, X]_{\mathfrak m}, X), \label{eq:Ricci-tensor}
\end{align}
for $X \in \mf m$, where $\{X_i\}$ is an orthonormal basis of $\mf m$ and $Z = \sum_i U(X_i, X_i)$.

Recall that the metric $g$ on $M = G/H$ is \emph{naturally reductive} if and only if $U \equiv 0$. An interesting particular case is when a left-invariant metric on a Lie group is naturally reductive (with respect to a certain transitive Lie group of isometries).

\begin{theorem}[\cite{datriNaturallyReductiveMetrics1979}]
  \label{thm:datri-ziller}
  Let $G$ be a compact, simple Lie group group endowed with a left-invariant metric $g$. Let $\mf g$ denote the Lie algebra of $G$ and let $g_{\mrm{bi}}$ be a bi-invariant metric on $G$ (which is a negative multiple of the Killing form of $\mf g$). The metric $g$ is naturally reductive if and only if there exists a subalgebra $\mf k$ of $\mf g$ such that
  \begin{equation*}
    g = g_{\mf k_0} \oplus \alpha_1 \, g_{\mrm{bi}}|_{\mf k_1} \oplus \cdots \oplus \alpha_r \, g_{\mrm{bi}}|_{\mf k_r} \oplus \alpha \, g_{\mrm{bi}}|_{\mf k^\bot} 
  \end{equation*}
  where $\mf k = \mf k_0 \oplus \mf k_1 \oplus \cdots \oplus \mf k_r$, with $\mf k_0$ the center of $\mf k$ and $\mf k_1, \ldots, \mf k_r$ are simple ideals. Here, $\mf k^\bot$ is the orthogonal complement of $\mf k$ with respect to the bi-invariant metric, $g_{\mf k_0}$ is an arbitrary inner product on $\mf k_0$, and $\alpha_1, \ldots, \alpha_r, \alpha$ are positive real numbers.
\end{theorem}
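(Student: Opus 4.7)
The plan is to prove both implications by constructing, or recognizing, a transitive Lie group of isometries of $(G, g)$ under which the naturally reductive structure becomes transparent.

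For the ($\Leftarrow$) direction, assume $g$ admits the stated decomposition and let $K \subset G$ be the connected subgroup with Lie algebra $\mf k$. I would consider the action of the enlarged group $G' := G \times K$ on $G$ by $(a, b) \cdot x = a x b^{-1}$. This action is transitive, and it is isometric with respect to $g$ because every summand in the given decomposition is $\Ad(K)$-invariant: the bi-invariant pieces $\alpha_i \, g_{\mrm{bi}}|_{\mf k_i}$ and $\alpha \, g_{\mrm{bi}}|_{\mf k^\bot}$ by $\Ad$-invariance of $g_{\mrm{bi}}$, and $g_{\mf k_0}$ because $\mf k_0$ is the center of $\mf k$. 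The isotropy at the identity is the diagonal $\Delta \mf k = \{(X,X) : X \in \mf k\}$, and I would propose the reductive complement
\begin{equation*}
\mf m = \mf k^\bot \oplus \{(X, \mu(X)) : X \in \mf k\},
\end{equation*}
where $\mu : \mf k \to \mf k$ is the $\Ad(K)$-equivariant endomorphism whose restriction to each simple ideal $\mf k_i$ (for $i \ge 1$) is an explicit scalar depending on $\alpha_i$ and whose restriction to the center $\mf k_0$ encodes $g_{\mf k_0}$. A direct computation using $[\mf k_i, \mf k_j] = 0$ for $i \neq j$ and the centrality of $\mf k_0$ then shows that the $U$-tensor vanishes identically on $\mf m$, so $g$ is naturally reductive with respect to $(G', \Delta K)$.

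For the ($\Rightarrow$) direction, assume $g$ is naturally reductive, and let $G' \supset G$ be a connected Lie group acting transitively by isometries of $(G, g)$ with reductive decomposition $\mf g' = \mf h \oplus \mf m$, $\mf m \simeq T_eG$, and $U \equiv 0$. Since $G$ already acts simply transitively by left translations, the extra infinitesimal isometries must normalize this action; the structural description of $\mrm{Isom}(G, g)^0$ for a compact simple $G$ shows that they act as right translations by a closed subgroup $K \subset G$, so $\mf g' \hookrightarrow \mf g \oplus \mf k$ with $\mf h$ the diagonal. Any reductive complement then has the form $\mf m = \mf k^\bot \oplus \{(X, \mu(X)) : X \in \mf k\}$ for some $\Ad(K)$-equivariant $\mu$. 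Decomposing $\mf k$ as its center $\mf k_0$ plus simple ideals $\mf k_1, \ldots, \mf k_r$, a Schur-type argument forces $\mu|_{\mf k_i}$ to be a scalar for each $i \ge 1$; imposing $U \equiv 0$ then pins down $g|_{\mf k_i}$ as a scalar multiple of $g_{\mrm{bi}}$ for $i \ge 1$ while leaving $g|_{\mf k_0}$ free, producing the claimed form of $g$.

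The main obstacle is the $(\Rightarrow)$ direction: establishing that the extra isometries of $(G, g)$ beyond left translations must act as right translations by a subgroup of $G$, i.e.\ that the Lie algebra of $\mrm{Isom}(G,g)^0$ embeds into $\mf g \oplus \mf g$. This is where the simplicity hypothesis on $G$ is essential — it rules out the additional automorphisms of $\mf g$ that could otherwise contribute to $\mf g'$. Once this is in place, the remaining analysis is largely algebraic: the decomposition of $\mf k$ into center and simple ideals, the $\Ad(K)$-equivariance of $\mu$, and the vanishing of $U$ combine through representation-theoretic bookkeeping to yield the explicit diagonal form of $g$.
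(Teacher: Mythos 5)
This statement is quoted in the paper as an external result of D'Atri and Ziller; the paper supplies no proof of it, so there is no internal argument to compare against. Your sketch does reproduce the architecture of the original proof in \cite{datriNaturallyReductiveMetrics1979}: the ($\Leftarrow$) direction via the $G\times K$ action by left and right translations, the diagonal isotropy $\Delta\mf k$, and a graph-type reductive complement $\{(X,\mu(X))\}\oplus\mf k^\bot$ on which $U$ vanishes is exactly how the sufficiency is established there, and the $\Ad(K)$-invariance of each summand (using that $\Ad(K)$ acts trivially on the center $\mf k_0$) is the right reason the right $K$-action is isometric.

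The substantive gap is in the ($\Rightarrow$) direction, and it is the step you yourself flag: the assertion that every connected transitive isometry group of $(G,g)$ sits inside $L(G)\cdot R(G)$, i.e.\ that $\mf g'$ embeds in $\mf g\oplus\mf g$ with $\mf h$ diagonal. This is not a routine consequence of simplicity; it is itself a theorem (due to Ochiai--Takahashi, and refined in D'Atri--Ziller's Lemma on $I_0(G,g)$ for compact simple $G$), and a blind proof cannot simply invoke ``the structural description of $\mathrm{Isom}(G,g)^0$'' without either proving it or citing it --- otherwise the hardest part of the classification is assumed. Two smaller points: (i) your claim that \emph{any} $\Ad(K)$-invariant complement to $\Delta\mf k$ has the form $\mf k^\bot\oplus\{(X,\mu(X))\}$ fails when $\mf k^\bot$ contains $\Ad(K)$-submodules isomorphic to submodules of $\mf k$ (e.g.\ trivial summands coming from the centralizer of $\mf k$), so the general complement must be handled or reduced to this case; (ii) the Schur argument pins down $g|_{\mf k_i}$ on the simple ideals, but you do not explain why $g|_{\mf k^\bot}$ is a \emph{single} multiple $\alpha\,g_{\mrm{bi}}|_{\mf k^\bot}$ rather than different multiples on distinct irreducible $\Ad(K)$-submodules of $\mf k^\bot$; this uses the vanishing of $U$ together with the simplicity of $\mf g$ (so that $\mf k^\bot$ generates $\mf g$) and is a genuine step in the original proof, not bookkeeping.
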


\section{The \texorpdfstring{$G_2$}{G2}-invariant metrics on \texorpdfstring{$\mathcal Z(\mbb S)$}{Z(S)} and \texorpdfstring{$\ZD(\mathbb S)$}{ZD(S)}}

Consider $\mathcal{Z}(\mbb S)$ as a submanifold of $\mbb S \times \mbb S \simeq \mbb R^{32}$ with the induced metric. Although this reduction is not necessary here, one could lower the codimension of $\mathcal Z(\mbb S)$. In fact, by Proposition~\ref{prop:zero-divisors}, $\mathcal Z(\mbb{S})$ is a submanifold of $S^6 \times S^6 \times S^6 \times S^6$. Since $G_2 = \Aut(\mbb O) \subset \Aut(\mbb S)$ acts isometrically on $\mbb S \times \mbb S$, we have that $\mathcal Z(\mbb S)$ is a homogeneous submanifold. Furthermore, by Theorem~\ref{thm:moreno}, the diffeomorphism $\mathcal{Z}(\mbb S) \simeq G_2$ induces a left-invariant metric $g$ on $G_2$.

\begin{theorem}
  The metric on $\mathcal Z(\mbb S)$ is naturally reductive. Furthermore, $\mathcal Z(\mbb S)$ is the total space of a Riemannian submersion over the excepcional symmetric space $G_2 /{\SO(4)}$ with totally geodesic fibers, which are locally isometric to a product of two round $3$-spheres with different radii. 
\end{theorem}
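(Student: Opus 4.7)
The plan is to fix a carefully chosen standard zero divisor $(u_0, v_0) \in \mathcal Z(\mbb S)$ as an origin, transfer the ambient metric to a left-invariant metric on $G_2$ via the orbit map, and apply Theorem~\ref{thm:datri-ziller} with the subalgebra $\mf k = \mf k_0 \oplus \mf m_0 \simeq \mf{so}(4)$ isolated in Section~\ref{sec:lie-algebra-g2}. I would select $(u_0, v_0)$ from the $84$ standard zero divisors so that $u_0$ lies in the ``diagonal $\mbb H$'' of $\mbb S = \mbb O \oplus \mbb O$; this guarantees that the $\SU(2)$-subgroup of $G_2$ with Lie algebra $\mf k_0$ (and which acts trivially on $\mbb H$) fixes $u_0$. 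By Theorem~\ref{thm:moreno} the orbit map $\phi \mapsto \phi \cdot (u_0, v_0)$ is a diffeomorphism, and its differential at $e$ identifies $X \in \mf g_2 \subset \mf{so}(\mbb O)$ with the tangent vector $(Xu_0, Xv_0) \in T_{(u_0, v_0)}(\mbb S \times \mbb S)$, where $X$ acts diagonally on the two $\mbb O$-summands of $\mbb S$.

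Pulling back the Euclidean inner product on $\mbb S \times \mbb S$ yields
\begin{equation*}
  g(X, Y) = \langle X u_0, Y u_0 \rangle + \langle X v_0, Y v_0 \rangle
\end{equation*}
on $\mf g_2$. The core of the argument is to compute the Gram matrix of $g$ in the basis $X_0, \ldots, X_{13}$ of Section~\ref{sec:lie-algebra-g2}. The tailored choice of $(u_0, v_0)$ is made precisely so that this Gram matrix is block-diagonal with respect to the decomposition $\mf g_2 = \mf k_0 \oplus \mf m_0 \oplus \mf m_1 \oplus \mf m_2$, each block being a scalar multiple of the identity: positive scalars $\alpha_1$ on $\mf k_0$, $\alpha_2$ on $\mf m_0$, and a common $\alpha$ on $\mf m_1 \oplus \mf m_2$. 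The merging of $\mf m_1$ with $\mf m_2$ into a single block, with $g$ proportional to $g_{\mrm{bi}}$ there, is heuristically expected because $\mf m_1 \oplus \mf m_2$ is the isotropy representation of the symmetric pair $(\mf g_2, \mf{so}(4))$ and thus $\Ad(\SO(4))$-irreducible. Natural reductivity then follows immediately from Theorem~\ref{thm:datri-ziller} applied to $\mf k = \mf k_0 \oplus \mf m_0$: its centre is trivial, its two simple ideals $\mf k_0$ and $\mf m_0$ carry positive multiples of $g_{\mrm{bi}}$, and $g$ restricted to $\mf k^\bot = \mf m_1 \oplus \mf m_2$ equals $\alpha\,g_{\mrm{bi}}|_{\mf k^\bot}$.

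The same data yield the Riemannian submersion structure. Since $g|_{\mf k^\bot}$ is a positive multiple of the normal homogeneous metric and $\mf k^\bot$ is $\Ad(\SO(4))$-invariant, $G_2 \to G_2/\SO(4)$ is a Riemannian submersion onto a rescaling of the exceptional symmetric space of quaternion subalgebras of $\mbb O$. Each fiber is a left translate of $\SO(4)$ carrying the restricted metric $\alpha_1 \, g_{\mrm{bi}}|_{\mf k_0} \oplus \alpha_2 \, g_{\mrm{bi}}|_{\mf m_0}$, which is bi-invariant on $\SO(4)$; combined with the $\Ad(\SO(4))$-invariance of $\mf k^\bot$, this forces the fibers to be totally geodesic via the standard Levi-Civita formula for left-invariant metrics. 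Lifting to the universal cover $\SU(2) \times \SU(2)$, the product structure together with the bi-invariance on each $\SU(2)$-factor realises the fiber metric as a product of two round $3$-sphere metrics whose radii are determined by $\alpha_1$ and $\alpha_2$. The main obstacle I foresee is purely computational: carrying out the Gram-matrix calculation for the chosen $(u_0, v_0)$ in the given basis, and verifying both the predicted block-diagonal form and the strict inequality $\alpha_1 \neq \alpha_2$ (which is what distinguishes the two 3-sphere radii). The SageMath verification mentioned in the introduction should make this step routine.
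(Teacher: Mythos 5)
Your proposal follows essentially the same route as the paper: fix a well-chosen standard zero divisor as origin, compute the Gram matrix of the pulled-back metric in the basis $X_0,\ldots,X_{13}$, read off the D'Atri--Ziller form with $\mf k=\mf k_0\oplus\mf m_0\simeq\so(4)$, and deduce the submersion over $G_2/{\SO(4)}$ with totally geodesic fibers covered by a product of two round $3$-spheres. One caveat: your heuristic for the origin (take $u_0$ in the ``diagonal $\mbb H$'' so that the $\SU(2)$ with Lie algebra $\mf k_0$ fixes $u_0$) is not what the paper uses and would likely fail to produce the block-scalar Gram matrix you predict --- the single-term computation for $\ZD(\mbb S)$ with $u_0=e_1+e_{10}$ already gives $\diag(\tfrac13,\tfrac13,\tfrac23)$ on $\mf m_0$ and distinct scalars $\tfrac12,\tfrac16$ on $\mf m_1,\mf m_2$. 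The paper explicitly warns that not every standard pair behaves well with respect to the decomposition and, after searching all $84$, settles on $(u_0,v_0)=(e_4+e_{13},e_6+e_{15})$, whose components lie in $\mbb H^\bot\oplus\mbb H^\bot$; also note that your irreducibility argument for a common scalar on $\mf m_1\oplus\mf m_2$ only applies a posteriori, since the inner product is not known to be $\Ad(\SO(4))$-invariant until the computation is done. Since you planned to verify the Gram matrix computationally anyway, this is a repairable misstep rather than a structural gap.
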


\begin{proof}
  It is sufficient to prove the theorem for the left-invariant metric on $G_2$ defined in the paragraph preceding the statement. To determine such a metric, one fixes an element $(u_0, v_0) \in \mathcal Z(\mbb S)$ and computes 
  \begin{equation}\label{eq:1}
    g(X_i, X_j) = (X_i \cdot (u_0, v_0))^T (X_j \cdot (u_0, v_0)).
  \end{equation}

  Note that not every zero divisor pair behaves nicely with respect to the decomposition $\mf g_2 = \mf k_0 \oplus \mf m_0 \oplus \mf m_1 \oplus \mf m_2$ given in Subsection \ref{sec:lie-algebra-g2}. By running \eqref{eq:1} over the standard zero divisors from Table \ref{tab:standard-84-zer-div}, we observe that if $(u_0, v_0) = (e_4 + e_{13}, e_6 + e_{15})$ the metric can be expressed as
  \begin{align*}
    g & = \sum_{i=0}^2 X^i \otimes X^i + \frac13 \sum_{i=3}^5 X^i \otimes X^i + \frac12 \sum_{i=6}^{13} X^i \otimes X^i = g_{\mrm{bi}}|_{\mf k_0} \oplus \frac13\,g_{\mrm{bi}}|_{\mf m_0} \oplus \frac12 \, g_{\mrm{bi}}|_{\mf m_1 \oplus \mf m_2}.
  \end{align*}
  From Theorem \ref{thm:datri-ziller}, it follows that this metric is naturally reductive. More precisely, this metric is naturally reductive with respect to $G_2 \times \SO(4)$, where the second factor acts on the right and the isotropy subgroup is given by $\diag(\SO(4) \times \SO(4))$. Thus, from \cite[Theorem 8]{datriNaturallyReductiveMetrics1979}, the subgroup $\SO(4) \subset G_2$, whose Lie algebra is given by $\mf k_0 \oplus \mf m_0$, is totally geodesic.

  Since $g|_{\mf m_1 \oplus \mf m_2}$ is a multiple of the bi-invariant metric, when restricted to $\mf m_1 \oplus \mf m_2$, and $\mf k_0 \oplus \mf m_0 \simeq \mf{so}(4)$ is orthogonal to $\mf m_1 \oplus \mf m_2$ with respect to both metrics, we conclude that $(G_2, g) \to G_2 / {\SO(4)}$ is a Riemannian submersion. The fiber of this submersion is isometric to the Lie group $\SO(4)$ endowed with the bi-invariant metric $g|_{\mf k_0 \oplus \mf m_0}$, which is obtained by taking two different scalings of the bi-invariant metric on the simple ideals $\so(3) \simeq \mf k_0 \simeq \mf m_0$ of $\so(4)$. Hence, the universal cover of $\SO(4)$ splits into a product of two round spheres with different radii.
\end{proof}

\begin{remark}
  Since the metric in $\mathcal Z(\mbb S)$ is naturally reductive, many geometric properties follow from existing results. For example, the (connected component of the) full isometry group is computed in \cite{datriNaturallyReductiveMetrics1979} (see also \cite{olmosNoteUniquenessCanonical2013}). The so-called index of symmetry of $\mathcal{Z}(\mbb S)$, which in this case is trivial, can be computed from the results in \cite{olmosIndexSymmetryCompact2014}. It can also be seen from \cite{datriNaturallyReductiveMetrics1979} that the metric on $\mathcal Z(\mbb S)$ is not Einstein. We verify this fact again in the next proposition by explicitly computing the Ricci tensor, which also allows us to show that the Ricci curvature is positive.
\end{remark}

\begin{proposition}
  $\mathcal Z(\mbb S)$ has positive Ricci curvature. Moreover,
\begin{equation}
    \Ric = \frac52\sum_{i=0}^2 X^i \otimes X^i + \frac{29}{54} \sum_{i=3}^5 X^i \otimes X^i + \frac56 \sum_{i=6}^{13} X^i \otimes X^i. \label{eq:Ricci-G2}
\end{equation}
\end{proposition}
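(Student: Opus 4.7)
The plan is to first exploit the $\Ad(\SO(4))$-symmetry of the metric to reduce $\Ric$ to three scalar unknowns via Schur's lemma, and then to compute those scalars using a simplified form of \eqref{eq:Ricci-tensor}. The metric $g$ is $\Ad(\SO(4))$-invariant at the identity because each summand $\mf k_0$, $\mf m_0$, $\mf m_1\oplus\mf m_2$ is an $\Ad(\SO(4))$-submodule of $\mf g_2$ and $g$ is a constant multiple of $g_{\mrm{bi}}$ on each. Combined with left-invariance, this makes right-multiplication by $\SO(4)\subset G_2$ act on $(G_2,g)$ by isometries, so $\Ric$ is itself $\Ad(\SO(4))$-invariant at the identity. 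Now $\mf k_0$ and $\mf m_0$ are the two inequivalent simple summands of $\so(4)=\so(3)\oplus\so(3)$, while $\mf m_1\oplus\mf m_2$ is the irreducible $8$-dimensional isotropy representation of the symmetric space $G_2/\SO(4)$; as $\Ad(\SO(4))$-modules these three spaces are pairwise non-isomorphic. Schur's lemma therefore forces $\Ric$ to take exactly the diagonal form asserted in \eqref{eq:Ricci-G2}, reducing the task to computing the three scalars $c_0=\Ric(X_0,X_0)$, $c_1=\Ric(X_3,X_3)$ and $c_2=\Ric(X_6,X_6)$.

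To compute each $c_\alpha$, I would apply \eqref{eq:Ricci-tensor} in the trivial reductive presentation $\mf h=0$, $\mf m=\mf g_2$, with a $g$-orthonormal basis obtained by rescaling $X_0,\ldots,X_{13}$. A convenient simplification is that, because $\mf g_2$ is semisimple, $\tr\ad_W=0$ for every $W$; unwinding the definition of $U$ then yields $g(Z,W)=\tr\ad_W=0$, so $Z=0$ and the last term of \eqref{eq:Ricci-tensor} vanishes. The remaining summations only involve the brackets $[X_i,X_j]$ and their $g$-norms, which are explicit from the matrix expressions in Section~\ref{sec:lie-algebra-g2}.

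The principal obstacle is purely computational: with $\dim\mf g_2=14$ and many nonzero brackets among the $X_i$, each of the three evaluations contributes on the order of a hundred terms, and one must in particular track to which of the three blocks each resulting vector belongs. This bookkeeping is best carried out with the SageMath code cited in the introduction, and yields $c_0=\tfrac{5}{2}$, $c_1=\tfrac{29}{54}$ and $c_2=\tfrac{5}{6}$. All three being strictly positive establishes the positivity of the Ricci curvature.
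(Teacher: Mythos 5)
Your proposal is correct and follows essentially the same route as the paper: a direct, computer-assisted evaluation of a standard Ricci formula for the left-invariant metric in the rescaled orthonormal basis (the paper uses the equivalent structure-constant formula rather than \eqref{eq:Ricci-tensor} with $\mf h=0$, and the unimodularity argument for $Z=0$ is the standard justification in either version). Your preliminary Schur's-lemma reduction via the right $\SO(4)$-isometries is a valid and tidy extra step, but it only predicts the block-diagonal form that the explicit computation produces anyway, so the substance of the proof is unchanged.
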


\begin{proof}
  It follows from a straightforward computation using the following well-known formula. Let $Y_0, \ldots, Y_{13}$ be an $g$-orthonormal basis of $\mf g_2$. Then 
  \begin{equation*}
    \Ric(Y_j, Y_h) = \frac12 \sum_{i,k} \left\{ c_{iki} (c_{kjh} + c_{khj}) + \frac12 c_{ikh} c_{ikj} - c_{ijk} c_{khi} + c_{iki} c_{jhk} - c_{ijk} c_{ihk} \right\}
  \end{equation*}
  where $c_{ijk} = g([Y_i, Y_j], Y_k )$. Since $g$ has diagonal form in the basis $X_0, \ldots, X_{13}$, we can choose $Y_i = g(X_i, X_i)^{-\frac12} X_i$. From this, we can show that
  \begin{equation*}
    \Ric = \frac52\sum_{i=0}^2 Y^i \otimes Y^i + \frac{29}{18} \sum_{i=3}^5 Y^i \otimes Y^i + \frac53 \sum_{i=6}^{13} Y^i \otimes Y^i,
  \end{equation*}
  which is equivalent to \eqref{eq:Ricci-G2}.
\end{proof}

Now we direct our attention to the geometry of $\ZD(\mbb S)$ with the metric induced from the ambient space $\mbb S \simeq \mbb R^{16}$. Since $G_2$ acts isometrically and transitively on $\ZD(\mbb S)$, we have that $\ZD(\mbb S)$ is isometric to the Stiefel manifold $G_2 \cdot u_0 = G_2 /{\SU(2)} = V_2(\mbb R^7)$, equipped with a certain $G_2$-invariant metric, where $\SU(2)$ is the isotropy subgroup of $u_0 \in \ZD(\mbb S)$. We again denote by $g$ such a metric, which is defined by 
\begin{equation*}
  g(X_i, X_j) = (X_i \cdot u_0)^T (X_j \cdot u_0).
\end{equation*}

Similarly to the case of $\mathcal Z(\mbb S)$, we can choose $u_0$ appropriately so that the Lie algebra of $\SU(2)$ is $\mf k_0$. Taking $u_0 = e_1 + e_{10}$, we obtain that
\begin{equation*}
  \mf g_2 = \mf k_0 \oplus \mf m, \qquad \text{where }\mf m = \mf m_0 \oplus \mf m_1 \oplus \mf m_2,
\end{equation*}
is a reductive decomposition for $G_2 / {\SU(2)}$. The corresponding $\Ad(\SU(2))$-invariant inner product on $\mf m$ is given by
\begin{align*}
  g & = \frac13 (X^3 \otimes X^3 + X^4 \otimes X^4) + \frac23 X^5 \otimes X^5 + \frac12 \sum_{i=6}^9 X^i \otimes X^i + \frac16 \sum_{i=10}^{13} X^i \otimes X^i.
\end{align*}

A detailed study of the isometry group and the curvature of $g$ is given in the next section. Before proceeding, we note a simple fact about the sectional curvatures of $g$.

\begin{remark}
  Let us denote by $\pi_{ij}$ the $2$-dimensional subspace of $\mf m$ generated by $X_i$ and $X_j$, where $3 \le i < j \le 13$. Then the sectional curvature of $\pi_{ij}$ is non-negative if and only if $\pi_{ij} \neq \pi_{34}$. This suggests that one could attempt to modify the metric $g$ along the direction normal to $\pi_{34}$ inside $\mf m_1$ in order to get some examples of metrics with non-negative sectional curvature. We explore this approach in the next section. 
\end{remark}

\section{A family of \texorpdfstring{$G_2$}{G2}-invariant metrics on \texorpdfstring{$V_2(\mathbb R^7)$}{V2R7}}

For each $r > 0$, we consider on $V_2(\mbb R^7)$ the family of $G_2$-invariant metrics given by  
\begin{equation}\label{eq:metric-stiefel}
  g_r = \frac13 \, (X^3 \otimes X^3 + X^4 \otimes X^4) + r \, X^5 \otimes X^5 + \frac12 \sum_{i=6}^9 X^i \otimes X^i + \frac16 \sum_{i=10}^{13} X^i \otimes X^i.
\end{equation}

Indeed, $g_r$ gives an $\Ad(\SU(2))$-invariant inner product on $\mf m$ since $\mf m_0$ is the subspace of fixed points of the isotropy representation of $G_2 / {\SU(2)}$ and 
\begin{align*}
  g_r |_{\mf m_1} = \frac12 \, g_{\mrm{bi}} |_{\mf m_1}, && g_r |_{\mf m_2} = \frac16 \, g_{\mrm{bi}} |_{\mf m_2}.
\end{align*}

Next, we compute the connected component of the full isometry group of $g_r$.

\begin{theorem}
  $I_0(V_2(\mbb R^7), g_r) \simeq G_2 \times S^1$.
\end{theorem}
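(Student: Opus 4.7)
The plan is to establish two inclusions: first that $G_2 \times S^1 \hookrightarrow I_0(V_2(\mbb R^7), g_r)$ via an explicit $S^1$-action commuting with $G_2$, and then that $\dim I_0 \le 15$ by a normalizer computation; combining these and using that $G_2$ is centerless will yield the direct product isomorphism.

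For the lower inclusion, I would exploit the fact that $X_5 \in \mf m_0$ centralizes $\mf k_0$ (since $[\mf m_0, \mf k_0] = 0$, recorded in Subsection \ref{sec:lie-algebra-g2}). Therefore the one-parameter subgroup $\{\exp(tX_5)\} \subset G_2$ normalizes the isotropy $\SU(2)$, and right multiplication by $\exp(tX_5)$ descends to a smooth $S^1$-action on $G_2/{\SU(2)}$ commuting with the left $G_2$-action. To verify that the action is isometric for $g_r$ it suffices to check that $\ad(X_5)|_{\mf m}$ is $g_r$-skew. Blockwise: on $\mf m_0$, $\ad(X_5)$ is the infinitesimal rotation in the $X_3X_4$-plane fixing $X_5$, and hence commutes with $g_r|_{\mf m_0} = \tfrac13 (X^3\otimes X^3 + X^4\otimes X^4) + r\,X^5\otimes X^5$; on $\mf m_1 \oplus \mf m_2$ it suffices, by a direct computation in $\mf{so}(8)$ using the basis of Subsection \ref{sec:lie-algebra-g2}, to verify that $\ad(X_5)$ preserves each $\mf m_i$ separately, whereupon skewness follows from bi-invariance since $g_r|_{\mf m_i}$ is a positive multiple of $g_{\mrm{bi}}|_{\mf m_i}$.

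For the upper bound I would invoke the standard fact that, for a $G_2$-invariant metric on $G_2/{\SU(2)}$, the Lie algebra of $I_0$ equals $\mf g_2 \oplus \mf c$, where $\mf c$ consists of the classes in $N_{\mf g_2}(\mf k_0)/\mf k_0$ represented by $X \in N_{\mf g_2}(\mf k_0)$ with $\ad(X)|_{\mf m}$ skew for $g_r$. Since each $\mf m_i$ ($i=1,2$) is an irreducible nontrivial $\mf k_0$-module (a consequence of the branching of $\mf g_2$ under $\mf{so}(4)$), no nonzero element of $\mf m_1 \oplus \mf m_2$ normalizes $\mf k_0$, so $N_{\mf g_2}(\mf k_0) = \mf k_0 \oplus \mf m_0 \simeq \mf{so}(4)$ and $N_{\mf g_2}(\mf k_0)/\mf k_0 \simeq \mf m_0$. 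Writing $X = aX_3 + bX_4 + cX_5$, $g_r$-skewness on $\mf m_0$ forces $a=b=0$ whenever $r \ne \tfrac13$; at $r = \tfrac13$ skewness on $\mf m_1 \oplus \mf m_2$ imposes the same conclusion because the distinct scales $\tfrac12 \ne \tfrac16$ prevent $\ad(X)$ from mixing $\mf m_1$ and $\mf m_2$. Thus $\mf c = \mbb R X_5$ and $\dim I_0 = 15$, as required. The principal technical hurdle is establishing the block-diagonality of $\ad(X_5)$ on $\mf m_1 \oplus \mf m_2$, a finite but tedious matrix computation in $\mf{so}(8)$ that can be confirmed with the SageMath code of \cite{reggianiAuxiliaryComputationsArticle2024}.
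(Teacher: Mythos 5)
Your lower bound---realizing the right action of $\exp(tX_5)$ as an isometric $S^1$-action commuting with $G_2$---is sound, and it is essentially the same computation the paper performs (checking that $\ad(X_5)|_{\mf m}$ is $g_r$-skew; note the block-diagonality on $\mf m_1\oplus\mf m_2$ genuinely needs checking, since $\mf m_1$ and $\mf m_2$ are isomorphic as $\mf k_0$-modules and Schur's lemma alone does not forbid mixing). The gap is in the upper bound. The ``standard fact'' you invoke---that for a $G_2$-invariant metric on $G_2/{\SU(2)}$ the isometry algebra equals $\mf g_2\oplus\mf c$ with $\mf c$ extracted from $N_{\mf g_2}(\mf k_0)/\mf k_0$---is not a fact. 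What that computation identifies is the algebra of Killing fields normalizing the $G_2$-action (equivalently, the $G_2$-equivariant isometries); it says nothing about isometries that do not normalize $G_2$. As a counterexample to the general principle, the round metric on $S^6=G_2/{\SU(3)}$ is $G_2$-invariant and $N_{\mf g_2}(\mf{su}(3))=\mf{su}(3)$, so your formula would predict $\dim I_0=14$, whereas $I_0=\SO(7)$ has dimension $21$.

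To close this you need precisely the two external inputs the paper uses: Onishchik's theorem that for a compact \emph{simple} transitive group $G$ one has $I_0(G/H,g)\subset I_0(G/H,g_{\mrm{nh}})$ for every $G$-invariant metric $g$, together with the computation from \cite{reggianiAffineGroupNormal2010} that $I_0(V_2(\mbb R^7),g_{\mrm{nh}})\simeq G_2\times\SU(2)$, the second factor acting on the right through $\mf m_0$. Once $I_0(g_r)$ is trapped inside $G_2\times\SU(2)$, your skewness analysis on $\mf m_0$ correctly isolates the surviving right factor as the circle generated by $X_5$ (and centerlessness of $G_2$ upgrades the almost direct product to a direct one). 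Without some such maximality statement you have only established the inclusion $G_2\times S^1\subset I_0(V_2(\mbb R^7),g_r)$, not the equality.
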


\begin{proof}
  Since $G_2$ is a compact simple Lie group, it follows from the results in \cite{onishchikGroupIsometriesCompact1992} that $I_0(V_2(\mbb R^7), g_r) \subset I_0(V_2(\mbb R^7), g_{\mrm{nh}})$, where $g_{\mrm{nh}} = g_{\mrm{bi}} |_{\mf m}$ is the normal homogeneous metric associated with the homogeneous presentation $V_2(\mbb R^7) = G_2 / {\SU(2)}$. From \cite{reggianiAffineGroupNormal2010}, we have that $I_0(V_2(\mbb R^7), g_{\mrm{nh}}) \simeq G_2 \times K$ (almost direct product) where the Lie algebra of $K$ is given by the $G_2$-invariant vector fields, which are identified with the fixed vectors of the isotropy representation. That is, the Lie algebra of $K$ is identified with $\mf m_0$, but the elements of $K \simeq \SU(2)$ act ``on the right''. Then, it is not difficult to see that $I_0(V_2(\mbb R^7)) \simeq G_2 \times K'$ (almost direct product) for a compact and connected subgroup $K'$ of $K$, which in principle depends on $r$. Since $\dim K = 3$, it is enough to see that $K' \neq K$ and $\dim K' \ge 1$.

  Now, for $Y \in \mf m_0$, let $\hat Y$ be the $G_2$-invariant vector field induced by $Y$. Using \eqref{eq:LC-connection} and the fact that $\nabla_{X^*} \hat Y = \nabla_{\hat Y} X^*$ for all $X \in \mf m$, one can see that $\hat Y$ is a Killing field for $g_r$ if and only if $[Y, -]_{\mf m}: \mf m \to \mf m$ is skew-symmetric with respect to $g_r$. The $\frac12$- and $\frac16$-scalings of the metric on the irreducible subspaces $\mf m_1$ and $\mf m_2$ prevent $\hat X_3$ and $\hat X_4$ from being Killing fields for $g_r$. However, one can check that $\hat X_5$ is a Killing field for $g_r$ for any $r > 0$. Thus $K' = S^1$, which implies $I_0(V_2(\mbb R^7)) = G_2 \times S^1$ is actually a direct product. Observe that we have proved that the $S^1$ factor is independent of $r$.
\end{proof}

Now, we compute the Ricci and scalar curvature of $g_r$. In particular, we obtain the following result.

\begin{theorem}\label{thm:Einstein-scal}
  \begin{enumerate}
    \item The metric $g_r$ is Einstein if and only if $r = \frac59$.
    \item The metric $g_r$ has positive scalar curvature if and only if $r < \frac{20}3$.
  \end{enumerate}
\end{theorem}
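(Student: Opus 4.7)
The plan is to derive an explicit diagonal expression for the Ricci tensor of $g_r$ from formula \eqref{eq:Ricci-tensor}; both statements of the theorem then reduce to elementary algebra in a single variable. The key structural input is that the $\Ad(\SU(2))$-decomposition $\mf m = \mf m_0 \oplus \mf m_1 \oplus \mf m_2$ consists of three pairwise inequivalent isotypical components: $\mf m_0$ is the trivial $3$-dimensional module, while $\mf m_1$ and $\mf m_2$ are non-isomorphic irreducible $4$-dimensional modules. By Schur's Lemma, the Ricci form therefore respects this decomposition and is a scalar multiple of $g_r$ on each of $\mf m_1$ and $\mf m_2$. On $\mf m_0$, the extra $S^1$-symmetry of $g_r$ established in the preceding theorem, together with the $X_3 \leftrightarrow X_4$ symmetry visible in \eqref{eq:metric-stiefel}, forces $\Ric|_{\mf m_0}$ to be diagonal in $\{X_3, X_4, X_5\}$ and to take equal values on $X_3, X_4$. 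Hence
\begin{equation*}
  \Ric = \alpha(r)(X^3 \otimes X^3 + X^4 \otimes X^4) + \beta(r)\, X^5 \otimes X^5 + \gamma(r) \sum_{i=6}^{9} X^i \otimes X^i + \delta(r) \sum_{i=10}^{13} X^i \otimes X^i
\end{equation*}
for some functions $\alpha, \beta, \gamma, \delta$ of $r$ to be determined.

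To compute these functions, I would first tabulate once and for all the $r$-independent structure constants $c_{ijk} = g_{\mrm{bi}}([X_i,X_j],X_k)$ of the basis of $\mf g_2$ from Subsection~\ref{sec:lie-algebra-g2}. Then I would apply \eqref{eq:Ricci-tensor} with $X$ taken successively to be $X_3, X_5, X_6, X_{10}$, using the $g_r$-orthonormal basis of $\mf m$ obtained by rescaling each $X_i$ by the inverse square root of its coefficient in \eqref{eq:metric-stiefel}. The correction vector $Z = \sum_i U(X_i, X_i)$ is $\Ad(\SU(2))$-invariant and $\ad(X_5)$-invariant, hence must be a scalar multiple of $X_5$ (and in particular $\langle [Z, X]_{\mf m}, X \rangle$ contributes only to the $X_5$-component); this is the only place where the $S^1$-asymmetry of $g_r$ enters nontrivially. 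The outcome is four explicit rational functions of $r$.

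With these in hand, part $(1)$ amounts to the system $3\alpha(r) = \beta(r)/r = 2\gamma(r) = 6\delta(r)$, obtained by comparing coefficients in $\Ric = \lambda g_r$ using \eqref{eq:metric-stiefel}. The three resulting scalar equations in the single unknown $r$ are expected to be simultaneously satisfied only at $r = \tfrac59$, yielding the Einstein constant as the common value. For part $(2)$, tracing with respect to a $g_r$-orthonormal basis gives
\begin{equation*}
  \scal(g_r) = 6\alpha(r) + \frac{\beta(r)}{r} + 8\gamma(r) + 24\delta(r),
\end{equation*}
and I expect this to be (up to a positive factor) an affine function of $r$ with unique root $r = \tfrac{20}{3}$, so that $\scal(g_r) > 0 \iff r < \tfrac{20}{3}$.

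The main obstacle is the sheer bookkeeping: decomposing the $91$ brackets $[X_i,X_j]$ into their $\mf k_0$- and $\mf m$-components and assembling the quadratic expressions in \eqref{eq:Ricci-tensor}. Fortunately, the matrices $E_{ij}$ appearing in the definitions of the $X_i$ are extremely sparse, so most brackets vanish, and the computation, while tedious, is entirely mechanical and has been independently cross-checked using the SageMath routines available at \cite{reggianiAuxiliaryComputationsArticle2024}.
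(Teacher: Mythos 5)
Your overall strategy is the paper's: evaluate \eqref{eq:Ricci-tensor} in the $g_r$-orthonormal basis obtained by rescaling the $X_i$, obtain an explicit diagonal Ricci tensor, and reduce both claims to elementary algebra in $r$. Your endgame is also consistent with the paper's formula \eqref{eq:Ricci-gr}: the Einstein system $3\alpha(r)=\beta(r)/r=2\gamma(r)=6\delta(r)$ collapses to $\frac{15r}{2}=5-\frac{3r}{2}$, giving $r=\frac59$, and your trace formula yields $\scal_{g_r}=50-\frac{15r}{2}$, positive exactly for $r<\frac{20}{3}$.

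There is, however, a genuine flaw in your a priori reduction: $\mf m_1$ and $\mf m_2$ are \emph{isomorphic} as $\SU(2)$-modules, not inequivalent. The subgroup $\exp(\mf k_0)\simeq\SU(2)$ acts trivially on $\mbb H$ and irreducibly on $\mbb H^\bot\simeq\mbb R^4$, and the basis vectors $X_6,\dots,X_{13}$ all lie in the span of the $E_{ij}$ with $i\in\{1,2,3\}$, $j\in\{4,\dots,7\}$, i.e.\ in $\Im\mbb H\otimes\mbb H^\bot\simeq(\mbb H^\bot)^{\oplus 3}$ as a $\mf k_0$-module. Since the realification of $\mbb C^2$ is the only faithful $4$-dimensional real irreducible representation of $\SU(2)$, both $\mf m_1$ and $\mf m_2$ are copies of it, and for a quaternionic-type module the space of invariant symmetric pairings between two copies has a $4$-dimensional off-diagonal part. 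Schur's Lemma therefore does \emph{not} force $\Ric_{g_r}(\mf m_1,\mf m_2)=0$, and evaluating \eqref{eq:Ricci-tensor} only at $X_3,X_5,X_6,X_{10}$ does not determine the Ricci tensor. You must additionally show that the cross terms $\Ric_{g_r}(X_i,X_j)$ with $6\le i\le 9<10\le j\le 13$ vanish — either by polarizing \eqref{eq:Ricci-tensor} over all pairs (which is in effect what the paper's computer-verified computation does), or by a finer symmetry argument, e.g.\ checking that $\ad(X_5)$, which preserves $\Ric_{g_r}$ because $\hat X_5$ is Killing, acts on $\mf m_1$ and $\mf m_2$ with distinct spectra. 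Your handling of the remaining blocks is sound: the $\mf m_0$--$\mf m_1$ and $\mf m_0$--$\mf m_2$ terms do vanish by Schur, the $S^1$-invariance alone (without appealing to an $X_3\leftrightarrow X_4$ ``symmetry'' of the coefficients, which is not by itself an isometry) forces $\Ric|_{\mf m_0}$ to be diagonal with equal $X_3$- and $X_4$-entries, and $Z\in\mbb R X_5$ as you say.
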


\begin{proof}
  Let $Y_3, \ldots, Y_{13}$ be the $g_r$-orthonormal basis of $\mf m$ obtained from normalizing the basis $X_3, \ldots, X_{13}$. We can use formula \eqref{eq:Ricci-tensor} to explicitly compute the Ricci tensor $\Ric_{g_r}$ of $g_r$. After lengthy computations, carefully verified using a computer (see \cite{reggianiAuxiliaryComputationsArticle2024}), we obtain
  \begin{equation}\label{eq:Ricci-gr}
    \Ric_{g_r} = \frac{15 r}{2} \, Y^5 \otimes Y^5 + \left(- \frac{3r}{2} + 5 \right) \sum_{i \neq 5} Y^i \otimes Y^i.
  \end{equation}
  
  Hence, $g_r$ is Einstein if and only if $r = \frac59$. Also, from \eqref{eq:Ricci-gr} we get that the scalar curvature $\scal_{g_r} = 50 - \frac{15}2r$ is positive if and only if $r < \frac{20}{3}$.
\end{proof}

\begin{remark}
  In \cite{jensenEinsteinMetricsPrincipal1973}, the construction of remarkable examples of Einstein metrics on the base space of certain principal bundles can be found. Such metrics are now known as Jensen metrics. In particular, there exist $G_2$-invariant Einstein metrics on $V_2(\mbb R^7)$, arising from the principal bundle $\SU(2) \to G_2 \to G_2 / {\SU(2)}$, which in our notation takes the form $t^2 \, g_{\mrm{bi}} |_{\mf m_0} \oplus g_{\mrm{bi}} |_{\mf m_1 \oplus \mf m_2}$ for certain values of $t > 0$. Notice that the metric $g_{\frac59}$ from Theorem~\ref{thm:Einstein-scal} is not a Jensen metric. Moreover, it is not even bi-invariant when restricted to $\mf m_0$.
\end{remark}

\begin{theorem}\label{thm:non-negative-curvature}
  The metric $g_r$ has non-negative sectional curvature if and only if $r \le \frac49$.
\end{theorem}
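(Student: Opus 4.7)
My plan follows the strategy outlined in the introduction. Let $F_r(X, Y) := g_r(R_{X, Y}X, Y)$ be the numerator of the sectional curvature (whose sign equals that of the sectional curvature itself). Viewed as a function of $X, Y \in \mathfrak{m}$ expressed in the fixed basis $X_3, \ldots, X_{13}$, $F_r$ becomes a polynomial of bidegree $(2, 2)$ in the resulting $22$ coordinates. The theorem splits into necessity ($r > \tfrac49$ forces some negative sectional curvature) and sufficiency ($r \le \tfrac49$ implies global non-negativity).

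For the necessity, I would exhibit an explicit $2$-plane whose sectional curvature becomes negative when $r > \tfrac49$. A natural candidate involves $X_5 \in \mathfrak{m}_0$, the only basis vector whose $g_r$-norm depends on the parameter $r$. Using \eqref{eq:LC-connection} together with the $\mathfrak{g}_2$ structure constants from Section~\ref{sec:lie-algebra-g2}, the sectional curvature of such a plane can be written as an explicit rational function of $r$ and shown to vanish at $r = \tfrac49$.

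For the sufficiency, after clearing the $r$-denominators (which arise only from the $X_5$-component of $U$ via $g_r^{-1}$) by a suitable rescaling, $F_r$ becomes a polynomial in the $22$ variables and in $r$. The key structural observation, verified by direct computation, is that in this normalization the dependence of $F_r$ on $r$ is \emph{affine}. Consequently, for $r \in [0, \tfrac49]$,
\begin{equation*}
  F_r = \Bigl(1 - \tfrac{9r}{4}\Bigr) F_0 + \tfrac{9r}{4}\, F_{\frac49}
\end{equation*}
is a convex combination of the endpoint polynomials $F_0$ (the formal extension at $r = 0$) and $F_{\frac49}$, so $F_r \ge 0$ on $\mathfrak{m} \times \mathfrak{m}$ whenever both endpoint polynomials are non-negative. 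The problem thus reduces to proving that the two specific polynomials $F_0$ and $F_{\frac49}$ in $22$ real variables are non-negative. I would certify each as a sum of squares of polynomials by formulating a semidefinite feasibility problem, solving it numerically with SageMath (as in~\cite{reggianiAuxiliaryComputationsArticle2024}), and then symbolically verifying the resulting decomposition over $\mathbb{Q}$.

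The main obstacle is the sum-of-squares step. The associated SDP is large, since $22$ variables and degree $4$ yield on the order of hundreds of monomials and a correspondingly sizable Gram matrix. Moreover, SOS is only a sufficient condition for polynomial non-negativity (by Hilbert's theorem, there exist non-negative polynomials that are not sums of squares), so a priori the strategy could have failed at this step; its success for the specific polynomials $F_0$ and $F_{\frac49}$ is the essential technical content of the proof.
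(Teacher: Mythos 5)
Your sufficiency argument is essentially the paper's: express $F_r$ in the $g_r$-orthonormal basis, observe it is affine in $r$ so that $F_r = (1-\tfrac{9r}{4})F_0 + \tfrac{9r}{4}F_{\frac49}$ is a convex combination for $r \in [0,\tfrac49]$, and certify $F_0$ and $F_{\frac49}$ as sums of squares via a Gram-matrix/SDP computation followed by exact verification (the paper additionally exploits the bidegree-$(2,2)$ structure you mention to shrink the Gram matrix from $253\times253$ to $143\times143$, which is what makes the SDP tractable, and records the exact sparse certificates $H_0$, $H_{\frac49}$).

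The one concrete misstep is in the necessity direction. A plane containing $X_5$ will not work: every coordinate plane $\pi_{5j}$ has sectional curvature $\tfrac34 r$ (up to normalization), which is positive for all $r>0$, so no such plane detects the threshold. The plane that does the job is $\pi_{34} = \mathbb{R}X_3 \oplus \mathbb{R}X_4$, i.e.\ the orthogonal complement of $X_5$ \emph{inside} $\mathfrak{m}_0$, for which $\kappa_{g_r}(\pi_{34}) = 1 - \tfrac94 r$; this is negative exactly when $r > \tfrac49$. The heuristic to keep in mind is the Berger-sphere one from the paper's closing remark: rescaling the metric in the $X_5$ direction shows up in the curvature of the plane \emph{orthogonal} to $X_5$, not in planes through $X_5$. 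This is easily repaired by a direct computation, but as stated your candidate would fail.
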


In order to prove our theorem, we will need the following result, which is a particular case of Theorem 1 in \cite{powersAlgorithmSumsSquares1998} (see also \cite{choiSumsSquaresReal1995}).

\begin{lemma}\label{lem:powers-wormann}
  Let $F \in \mbb R[x_0, \ldots, x_n]$ be a homogeneous polynomial of degree $4$. Then $F$ is a (polynomial) sum of squares if and only if there exists a symmetric positive semi-definite matrix $H$ such that
  \begin{equation}\label{eq:gram-matrix}
    F = \boldsymbol{x}^T H \boldsymbol{x}
  \end{equation}
  where $\boldsymbol{x} = (x_0^2, x_0 x_1, \ldots, x_{n-1}x_n, x_n^2)^T$ is the vector of monomials of degree $2$.
\end{lemma}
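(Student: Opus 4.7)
The approach is standard linear algebra combined with the spectral theorem for real symmetric matrices. My plan is to prove each implication separately, after first fixing the interpretation of the identity $F = \boldsymbol{x}^T H \boldsymbol{x}$: the entries of $\boldsymbol{x}$ are the degree-$2$ monomials in $x_0, \ldots, x_n$, and the identity is an equality in $\mathbb{R}[x_0, \ldots, x_n]$, obtained by expanding the quadratic form in $\boldsymbol{x}$ and collecting like degree-$4$ monomials.

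For the direction ``PSD Gram matrix implies SOS'', I would apply the spectral theorem to write $H = \sum_k \lambda_k \boldsymbol{v}_k \boldsymbol{v}_k^T$ with $\lambda_k \geq 0$, substitute, and obtain
\begin{equation*}
  F = \boldsymbol{x}^T H \boldsymbol{x} = \sum_k \lambda_k (\boldsymbol{v}_k^T \boldsymbol{x})^2 = \sum_k \bigl(\sqrt{\lambda_k}\, \boldsymbol{v}_k^T \boldsymbol{x}\bigr)^2.
\end{equation*}
Each $\boldsymbol{v}_k^T \boldsymbol{x}$ is a linear combination of the entries of $\boldsymbol{x}$, hence a homogeneous polynomial of degree $2$ in $x_0, \ldots, x_n$, so $F$ is exhibited as a polynomial sum of squares.

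For the reverse direction, I would start from a representation $F = \sum_k p_k^2$ and first argue that the summands may be taken to be homogeneous of degree $2$. Decomposing $p_k = \sum_d p_k^{(d)}$ into homogeneous components and comparing the homogeneous components of $\sum_k p_k^2$ with those of $F$ (which vanish outside degree $4$), the vanishing of the top degree $2 d_{\max}$ component forces $\sum_k (p_k^{(d_{\max})})^2 = 0$ whenever $d_{\max} > 2$, hence $d_{\max} \leq 2$; similarly the vanishing in degrees $0$ and $2$ of $F$ forces $p_k^{(0)} = 0$ and then $p_k^{(1)} = 0$. Once each $p_k$ is homogeneous of degree $2$, write $p_k = \boldsymbol{v}_k^T \boldsymbol{x}$ by reading off its coefficients in the monomial basis $\boldsymbol{x}$, and set $H = \sum_k \boldsymbol{v}_k \boldsymbol{v}_k^T$. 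The matrix $H$ is symmetric and positive semi-definite by construction, and
\begin{equation*}
  \boldsymbol{x}^T H \boldsymbol{x} = \sum_k (\boldsymbol{v}_k^T \boldsymbol{x})^2 = \sum_k p_k^2 = F.
\end{equation*}

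The only real subtlety is the reduction to homogeneous summands in the SOS-to-Gram direction; the remainder is an application of the spectral theorem. I would also note in passing that the Gram matrix $H$ associated with $F$ is \emph{not} unique, since products of pairs of degree-$2$ monomials are not linearly independent as degree-$4$ monomials, so different $H$'s can produce the same $F$. This non-uniqueness is harmless for the statement (which only asserts existence), and is precisely what makes the search for a PSD representative amenable to semidefinite programming in the application to Theorem~\ref{thm:non-negative-curvature}.
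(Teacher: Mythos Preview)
Your proof is correct. Both implications are handled cleanly: the spectral decomposition gives the Gram-to-SOS direction, and the degree-counting argument for stripping non-quadratic homogeneous components in the SOS-to-Gram direction is valid (the key point being that a sum of squares of real polynomials vanishes only if each summand does).

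The paper itself does not prove this lemma; it simply records it as a special case of a theorem of Powers and W\"ormann (with a pointer also to Choi--Lam--Reznick) and moves on. So there is no ``paper's proof'' to compare against---you have supplied a self-contained argument where the paper relies on a citation. What you wrote is exactly the standard proof one finds in the cited sources, and your closing remark on the non-uniqueness of $H$ (because distinct pairs of quadratic monomials can multiply to the same quartic monomial) matches the paper's own observation immediately after the lemma about the dimension of the affine space of Gram representations and its relevance to the semidefinite programming approach in Theorem~\ref{thm:non-negative-curvature}.
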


Let us mention that the vector $\boldsymbol{x}$ has $\frac{(n+2)(n+1)}{2}$ coordinates and the subspace of, not necessarily positive semi-definite, matrices $H$ satisfying \eqref{eq:gram-matrix} has dimension $\frac{(n + 2) (n + 1)^{2} n}{12}$. Thus, finding an exact (positive semi-definite) solution $H$ for equation \eqref{eq:gram-matrix} can be quite difficult, even for relatively small values of $n$.

\begin{proof}[{\proofname} of Theorem \ref{thm:non-negative-curvature}]
  It is not hard to see that if $\pi_{34} = \mbb R X_3 \oplus \mbb R X_4$, then the sectional curvature of the plane $\pi_{34}$ is 
  \begin{equation*}
    \kappa_{g_r}(\pi_{34}) = -\frac94 \, r + 1.
  \end{equation*} 

  Thus, $g_r$ does not have non-negative sectional curvature for $r > \frac49$. Let $Y_3, \ldots, Y_{13}$ be the orthonormal basis of $\mf m$ defined in the proof of Theorem \ref{thm:Einstein-scal} and write
  \begin{align*}
    X = \sum_{i = 3}^{13} x_{i - 3} Y_i, && Y = \sum_{i = 3}^{13} x_{i + 8} Y_i.
  \end{align*} 

  For each $r$, consider the polynomial
  \begin{equation*}
    F_r = g_r(R^{g_r}_{X, Y}X, Y) \in \mbb R[x_0, \ldots, x_{21}],
  \end{equation*}
  where $R^{g_r}$ denotes the curvature tensor of $g_r$. Observe that we can formally extend the polynomial $F_r$ to every $r \in \mbb R$ (even when $g_r$ does not make sense for $r \le 0$). Moreover, from the explicit formula for $F_r$, which can be found in the Appendix \ref{app:formula-Fr}, we see that fixing $x_0, \ldots, x_{21}$, the map $r \mapsto F_r(x_0, \ldots, x_{21})$ defines a linear function on $r$. Thus, it is enough to prove that the polynomials $F_0$ and $F_{\frac49}$ are non-negative. We will use Lemma~\ref{lem:powers-wormann} to prove the stronger statement that $F_0$ and $F_{\frac49}$ are polynomial sums of squares. Since $F_r$ is obtained from computing sectional curvatures, every monomial $x_i x_j x_k x_l$ with non-trivial coefficient in $F_r$ satisfies $0 \le i \le j \le 10 < k \le l \le 21$. Hence, we do not lose generality replacing $\boldsymbol{x}^T$ in Lemma \ref{lem:powers-wormann} with 
  \begin{equation*}
    \boldsymbol{x}^T = (x_0 x_{11}, \ldots, x_0 x_{21}, \ldots, x_{10} x_{11}, \ldots, x_{10} x_{21}, x_0^2, \ldots, x_{21}^2).
  \end{equation*}
  This change substantially reduces the size of the system \eqref{eq:gram-matrix} from $253 \times 253$ to $143 \times 143$. Now we are looking for symmetric positive semi-definite matrices $H_\alpha$ such that
  \begin{align*}
    F_\alpha = \boldsymbol{x}^T H_\alpha \boldsymbol{x}, && \alpha \in \{ 0, \tfrac49\}.
  \end{align*} 

  This is a convex optimization problem, which, thanks to the reduction of the dimension mentioned above, can be successfully solved by the Python solver CVXOPT. We implemented the computer code in SageMath through two instances of SemidefiniteProgram(). However, this only yields numerical solutions, and since the condition of being positive semi-definite is a closed one, an exact solution is not guaranteed. Nonetheless, since the polynomial sums of squares are dense in the set of non-negative polynomials, exact solutions are expected to exist.  Moreover, since $F_r$ has relatively few non-trivial coefficients, one can expect to find sparse solutions $H_0$ and $H_{\frac49}$. This is indeed the case, since rounding the numerical solutions lead us to the exact solution described as follows. 
  
  Define the index subsets
  {\allowdisplaybreaks
  \begin{align*}
    I_{0, -2} = {} &  \{(39, 69), (49, 59), (87, 117), (97, 107)\}, \\
    I_{0, -1} = {} &  \{(1, 11), (1, 39), (1, 59), (1, 87), (1, 107), (11, 49), (11, 69), (11, 97), (11, 117), \\
    &  (39, 49), (39, 117), (49, 107), (59, 69), (59, 97), (69, 87), (87, 97), (107, 117)\}, \\
    I_{0, -\frac12} = {} & \{(3, 17), (3, 33), (3, 88), (3, 100), (4, 7), (4, 44), (4, 56), (4, 111), (5, 10), (5, 15), \\
    & (5, 18), (5, 55), (6, 19), (6, 34), (6, 66), (6, 99), (7, 16), (7, 21), (7, 77), (8, 17), \\
    & (8, 33), (8, 88), (8, 100), (9, 19), (9, 34), (9, 66), (9, 99), (10, 45), (10, 78), (10, 110), \\
    & (14, 19), (14, 34), (14, 66), (14, 99), (15, 45), (15, 78), (15, 110), (16, 44), (16, 56), \\
    & (16, 111), (17, 20), (17, 67), (18, 45), (18, 78), (18, 110), (19, 89), (20, 33), (20, 88), \\ 
    & (20, 100), (21, 44), (21, 56), (21, 111), (33, 67), (34, 89), (37, 47), (37, 71), (37, 95), \\
    & (37, 119), (38, 50), (38, 58), (38, 98), (38, 106), (40, 80), (40, 92), (40, 104), \\ 
    &(40, 116), (43, 53), (43, 73), (43, 93), (43, 113), (44, 77), (45, 55), (47, 61), (47, 85), \\
    & (47, 109), (50, 70), (50, 86), (50, 118), (52, 80), (52, 92), (52, 104), (52, 116), \\
    & (53, 63), (53, 83), (53, 103), (55, 78), (55, 110), (56, 77), (58, 70), (58, 86), (58, 118), \\
    & (61, 71), (61, 95), (61, 119), (63, 73), (63, 93), (63, 113), (64, 80), (64, 92), (64, 104), \\
    & (64, 116), (66, 89), (67, 88), (67, 100), (70, 98), (70, 106), (71, 85), (71, 109), \\ 
    & (73, 83), (73, 103), (76, 80), (76, 92), (76, 104), (76, 116), (77, 111), (83, 93), \\
    & (83, 113), (85, 95), (85, 119), (86, 98), (86, 106), (89, 99), (93, 103), (95, 109), \\
    & (98, 118), (103, 113), (106, 118), (109, 119)\}, \\
    I_{0, \frac12} = {} & \{(3, 3), (3, 8), (3, 20), (3, 67), (4, 4), (4, 16), (4, 21), (4, 77), (5, 5), (5, 45), (5, 78), \\
    & (5, 110), (6, 6), (6, 9), (6, 14), (6, 89), (7, 7), (7, 44), (7, 56), (7, 111), (8, 8), (8, 20), \\
    & (8, 67), (9, 9), (9, 14), (9, 89), (10, 10), (10, 15), (10, 18), (10, 55), (14, 14), (14, 89), \\ 
    & (15, 15), (15, 18), (15, 55), (16, 16), (16, 21), (16, 77), (17, 17), (17, 33), (17, 88), \\
    & (17, 100), (18, 18), (18, 55), (19, 19), (19, 34), (19, 66), (19, 99), (20, 20), (20, 67), \\
    & (21, 21), (21, 77), (33, 33), (33, 88), (33, 100), (34, 34), (34, 66), (34, 99), (37, 37), \\
    & (37, 61), (37, 85), (37, 109), (38, 38), (38, 70), (38, 86), (38, 118), (40, 40), (40, 52), \\
    & (40, 64), (40, 76), (43, 43), (43, 63), (43, 83), (43, 103), (44, 44), (44, 56), (44, 111), \\
    & (45, 45), (45, 78), (45, 110), (47, 47), (47, 71), (47, 95), (47, 119), (50, 50), (50, 58), \\
    & (50, 98), (50, 106), (52, 52), (52, 64), (52, 76), (53, 53), (53, 73), (53, 93), (53, 113), \\
    & (55, 55), (56, 56), (56, 111), (58, 58), (58, 98), (58, 106), (61, 61), (61, 85), (61, 109), \\
    & (63, 63), (63, 83), (63, 103), (64, 64), (64, 76), (66, 66), (66, 99), (67, 67), (70, 70), \\
    & (70, 86), (70, 118), (71, 71), (71, 95), (71, 119), (73, 73), (73, 93), (73, 113), (76, 76), \\
    & (77, 77), (78, 78), (78, 110), (80, 80), (80, 92), (80, 104), (80, 116), (83, 83), \\
    & (83, 103), (85, 85), (85, 109), (86, 86), (86, 118), (88, 88), (88, 100), (89, 89), \\
    & (92, 92), (92, 104), (92, 116), (93, 93), (93, 113), (95, 95), (95, 119), (98, 98), \\
    & (98, 106), (99, 99), (100, 100), (103, 103), (104, 104), (104, 116), (106, 106), \\
    & (109, 109), (110, 110), (111, 111), (113, 113), (116, 116), (118, 118), (119, 119)\}, \\
    I_{0, 1} = {} & \{(1, 1), (1, 49), (1, 69), (1, 97), (1, 117), (11, 11), (11, 39), (11, 59), (11, 87), (11, 107), \\
    & (39, 59), (39, 87), (49, 69), (49, 97), (59, 107), (69, 117), (87, 107), (97, 117) \}, \\
    I_{0, 2} = {} & \{(39, 39), (49, 49), (59, 59), (69, 69), (87, 87), (97, 97), (107, 107), (117, 117)\},
  \end{align*} 
  }
  and let $H_0$ be the symmetric matrix defined as
  \begin{equation*}
    (H_0)_{ij} = \begin{cases}
      a, & (i, j) \in I_{0, a}, \\
      0, & (i, j) \notin I_{0, \pm \frac12} \cup I_{0, \pm 1} \cup I_{0, \pm 2}.
    \end{cases}
  \end{equation*}

  Now it is routine to verify that $H_0$ is positive semi-definite and satisfies $F_0 = \boldsymbol{x}^T H_0 \boldsymbol{x}$. Similarly, if 
  {\allowdisplaybreaks
  \begin{align*}
    I_{\frac49, -1} = {} & \{(39, 69), (39, 107), (49, 59), (49, 117), (59, 87), (69, 97), (87, 117), (97, 107)\}, \\
    I_{\frac49, -\frac12} = {} & \{(3, 17), (3, 33), (3, 88), (3, 100), (4, 7), (4, 44), (4, 56), (4, 111), (5, 10), (5, 15), \\
    & (5, 18), (5, 55), (6, 19), (6, 34), (6, 66), (6, 99), (7, 16), (7, 21), (7, 77), (8, 17), \\
    & (8, 33), (8, 88), (8, 100), (9, 19), (9, 34), (9, 66), (9, 99), (10, 45), (10, 78), (10, 110), \\
    & (14, 19), (14, 34), (14, 66), (14, 99), (15, 45), (15, 78), (15, 110), (16, 44), (16, 56), \\
    & (16, 111), (17, 20), (17, 67), (18, 45), (18, 78), (18, 110), (19, 89), (20, 33), (20, 88), \\
    & (20, 100), (21, 44), (21, 56), (21, 111), (33, 67), (34, 89), (37, 47), (37, 71), (37, 95), \\
    & (37, 119), (38, 50), (38, 58), (38, 98), (38, 106), (40, 80), (40, 92), (40, 104), \\
    & (40, 116), (43, 53), (43, 73), (43, 93), (43, 113), (44, 77), (45, 55), (47, 61), (47, 85), \\
    & (47, 109), (50, 70), (50, 86), (50, 118), (52, 80), (52, 92), (52, 104), (52, 116), \\
    & (53, 63), (53, 83), (53, 103), (55, 78), (55, 110), (56, 77), (58, 70), (58, 86), (58, 118), \\
    & (61, 71), (61, 95), (61, 119), (63, 73), (63, 93), (63, 113), (64, 80), (64, 92), (64, 104), \\
    & (64, 116), (66, 89), (67, 88), (67, 100), (70, 98), (70, 106), (71, 85), (71, 109), \\
    & (73, 83), (73, 103), (76, 80), (76, 92), (76, 104), (76, 116), (77, 111), (83, 93), \\
    & (83, 113), (85, 95), (85, 119), (86, 98), (86, 106), (89, 99), (93, 103), (95, 109), \\ 
    & (98, 118), (103, 113), (106, 118), (109, 119)\}, \\
    I_{\frac49, -\frac13} = {} & \{(2, 22), (13, 23), (25, 35), (26, 46), (27, 57), (28, 68), (29, 79), (30, 90), (31, 101), \\
    & (32, 112)\}, \\
    I_{\frac49, \frac13} = {} & \{(2, 2), (13, 13), (22, 22), (23, 23), (25, 25), (26, 26), (27, 27), (28, 28), (29, 29), \\
    & (30, 30), (31, 31), (32, 32), (35, 35), (46, 46), (57, 57), (68, 68), (79, 79), (90, 90), \\
    & (101, 101), (112, 112)\}, \\
    I_{\frac49, \frac12} = {} & \{(3, 3), (3, 8), (3, 20), (3, 67), (4, 4), (4, 16), (4, 21), (4, 77), (5, 5), (5, 45), (5, 78), \\
    & (5, 110), (6, 6), (6, 9), (6, 14), (6, 89), (7, 7), (7, 44), (7, 56), (7, 111), (8, 8), (8, 20), \\
    & (8, 67), (9, 9), (9, 14), (9, 89), (10, 10), (10, 15), (10, 18), (10, 55), (14, 14), (14, 89), \\
    & (15, 15), (15, 18), (15, 55), (16, 16), (16, 21), (16, 77), (17, 17), (17, 33), (17, 88), \\
    & (17, 100), (18, 18), (18, 55), (19, 19), (19, 34), (19, 66), (19, 99), (20, 20), (20, 67), \\
    & (21, 21), (21, 77), (33, 33), (33, 88), (33, 100), (34, 34), (34, 66), (34, 99), (37, 37), \\
    & (37, 61), (37, 85), (37, 109), (38, 38), (38, 70), (38, 86), (38, 118), (40, 40), (40, 52), \\
    & (40, 64), (40, 76), (43, 43), (43, 63), (43, 83), (43, 103), (44, 44), (44, 56), (44, 111), \\
    & (45, 45), (45, 78), (45, 110), (47, 47), (47, 71), (47, 95), (47, 119), (50, 50), (50, 58), \\
    & (50, 98), (50, 106), (52, 52), (52, 64), (52, 76), (53, 53), (53, 73), (53, 93), (53, 113), \\
    & (55, 55), (56, 56), (56, 111), (58, 58), (58, 98), (58, 106), (61, 61), (61, 85), (61, 109), \\
    & (63, 63), (63, 83), (63, 103), (64, 64), (64, 76), (66, 66), (66, 99), (67, 67), (70, 70), \\
    & (70, 86), (70, 118), (71, 71), (71, 95), (71, 119), (73, 73), (73, 93), (73, 113), (76, 76), \\
    & (77, 77), (78, 78), (78, 110), (80, 80), (80, 92), (80, 104), (80, 116), (83, 83), \\
    & (83, 103), (85, 85), (85, 109), (86, 86), (86, 118), (88, 88), (88, 100), (89, 89), \\
    & (92, 92), (92, 104), (92, 116), (93, 93), (93, 113), (95, 95), (95, 119), (98, 98), \\
    & (98, 106), (99, 99), (100, 100), (103, 103), (104, 104), (104, 116), (106, 106), \\
    & (109, 109), (110, 110), (111, 111), (113, 113), (116, 116), (118, 118), (119, 119)\}, \\
    I_{\frac49, 1} = {} & \{(39, 39), (39, 97), (49, 49), (49, 87), (59, 59), (59, 117), (69, 69), (69, 107), (87, 87), \\
    & (97, 97), (107, 107), (117, 117)\}, \\
  \end{align*}
  }
  then \begin{equation*}
    (H_{\frac49})_{ij} = \begin{cases}
      a, & (i, j) \in I_{\frac49, a}, \\
      0, & (i, j) \notin I_{\frac49, \pm \frac13} \cup I_{\frac49, \pm \frac12} \cup I_{\frac49, \pm 1},
    \end{cases}
  \end{equation*}
  defines a symmetric positive semi-definite matrix satisfying $F_{\frac49} = \boldsymbol{x}^T H_{\frac49} \boldsymbol{x}$. This concludes the proof of the theorem.
\end{proof}

\begin{remark}
  We are not certain if $F_r$ is a polynomial sum of squares for $0 < r < \frac49$. Although it is not needed in the proof of Theorem \ref{thm:non-negative-curvature}, it would be interesting to know if this is indeed the case. 
\end{remark}

\begin{remark}
  The definition \eqref{eq:metric-stiefel} of the metric $g_r$ resembles the construction of Berger spheres from the Hopf fibration by shrinking the metrics along the fibers. Moreover, if we restrict $g_r$ to $\mf m_0 \simeq \mf{su}(2)$, then $S^3_r = (\SU(2), g_r |_{\mf m_0})$ is a Berger sphere. Recall that $S^3_r$ has (strictly) positive sectional curvature if and only if $0 < r < \frac49$.
\end{remark}

\appendix

\section{}

\subsection{Standard zero divisors} 

In this appendix, we include Table \ref{tab:standard-84-zer-div} with the standard zero divisors of the sedenion algebra.

\begin{table}[ht]
  \caption{The $84$ standard zero divisors of $\mbb S$}
  \label{tab:standard-84-zer-div}
  \centering
  $\begin{array}{llll}
  \hline \\[-12pt]
  (e_1+e_{10},e_{4}-e_{15})&(e_1+e_{10},e_{5}+e_{14})&(e_1+e_{10},e_{6}-e_{13})&(e_1+e_{10},e_{7}+e_{12})\\
  (e_1+e_{11},e_{4}+e_{14})&(e_1+e_{11},e_{5}+e_{15})&(e_1+e_{11},e_{6}-e_{12})&(e_1+e_{11},e_{7}-e_{13})\\
  (e_1+e_{12},e_{2}+e_{15})&(e_1+e_{12},e_{3}-e_{14})&(e_1+e_{12},e_{6}+e_{11})&(e_1+e_{12},e_{7}-e_{10})\\
  (e_1+e_{13},e_{2}-e_{14})&(e_1+e_{13},e_{3}-e_{15})&(e_1+e_{13},e_{6}+e_{10})&(e_1+e_{13},e_{7}+e_{11})\\
  (e_1+e_{14},e_{2}+e_{13})&(e_1+e_{14},e_{3}+e_{12})&(e_1+e_{14},e_{4}-e_{11})&(e_1+e_{14},e_{5}-e_{10})\\
  (e_1+e_{15},e_{2}-e_{12})&(e_1+e_{15},e_{3}+e_{13})&(e_1+e_{15},e_{4}+e_{10})&(e_1+e_{15},e_{5}-e_{11})\\
  (e_2+e_{9},e_{4}+e_{15})&(e_2+e_{9},e_{5}-e_{14})&(e_2+e_{9},e_{6}+e_{13})&(e_2+e_{9},e_{7}-e_{12})\\
  (e_2+e_{11},e_{4}-e_{13})&(e_2+e_{11},e_{5}+e_{12})&(e_2+e_{11},e_{6}+e_{15})&(e_2+e_{11},e_{7}-e_{14})\\
  (e_2+e_{12},e_{3}+e_{13})&(e_2+e_{12},e_{5}-e_{11})&(e_2+e_{12},e_{7}+e_{9})&(e_2+e_{13},e_{3}-e_{12})\\
  (e_2+e_{13},e_{4}+e_{11})&(e_2+e_{13},e_{6}-e_{9})&(e_2+e_{14},e_{3}-e_{15})&(e_2+e_{14},e_{5}+e_{9})\\
  (e_2+e_{14},e_{7}+e_{11})&(e_2+e_{15},e_{3}+e_{14})&(e_2+e_{15},e_{4}-e_{9})&(e_2+e_{15},e_{6}-e_{11})\\
  (e_3+e_{9},e_{4}-e_{14})&(e_3+e_{9},e_{5}-e_{15})&(e_3+e_{9},e_{6}+e_{12})&(e_3+e_{9},e_{7}+e_{13})\\
  (e_3+e_{10},e_{4}+e_{13})&(e_3+e_{10},e_{5}-e_{12})&(e_3+e_{10},e_{6}-e_{15})&(e_3+e_{10},e_{7}+e_{14})\\
  (e_3+e_{12},e_{5}+e_{10})&(e_3+e_{12},e_{6}-e_{9})&(e_3+e_{13},e_{4}-e_{10})&(e_3+e_{13},e_{7}-e_{9})\\
  (e_3+e_{14},e_{4}+e_{9})&(e_3+e_{14},e_{7}-e_{10})&(e_3+e_{15},e_{5}+e_{9})&(e_3+e_{15},e_{6}+e_{10})\\
  (e_4+e_{9},e_{6}-e_{11})&(e_4+e_{9},e_{7}+e_{10})&(e_4+e_{10},e_{5}+e_{11})&(e_4+e_{10},e_{7}-e_{9})\\
  (e_4+e_{11},e_{5}-e_{10})&(e_4+e_{11},e_{6}+e_{9})&(e_4+e_{13},e_{6}+e_{15})&(e_4+e_{13},e_{7}-e_{14})\\
  (e_4+e_{14},e_{5}-e_{15})&(e_4+e_{14},e_{7}+e_{13})&(e_4+e_{15},e_{5}+e_{14})&(e_4+e_{15},e_{6}-e_{13})\\
  (e_5+e_{9},e_{6}-e_{10})&(e_5+e_{9},e_{7}-e_{11})&(e_5+e_{10},e_{6}+e_{9})&(e_5+e_{11},e_{7}+e_{9})\\
  (e_5+e_{12},e_{6}-e_{15})&(e_5+e_{12},e_{7}+e_{14})&(e_5+e_{14},e_{7}-e_{12})&(e_5+e_{15},e_{6}+e_{12})\\
  (e_6+e_{10},e_{7}-e_{11})&(e_6+e_{11},e_{7}+e_{10})&(e_6+e_{12},e_{7}-e_{13})&(e_6+e_{13},e_{7}+e_{12})\\[2pt]
  \hline
  \end{array}$
\end{table}

\subsection{Expression for the sectional curvature} \label{app:formula-Fr} 

We write down the polynomial $F_r$ used in the proof of Theorem \ref{thm:non-negative-curvature}.

{\allowdisplaybreaks
\begin{align*}
  F_r = {} & (-\tfrac{9}{4} r + 1) x_{0}^{2} x_{12}^{2} + \tfrac{3}{4} r x_{0}^{2} x_{13}^{2} + \tfrac{1}{2} x_{0}^{2} x_{14}^{2} + x_{0}^{2} x_{14} x_{19} + \tfrac{1}{2} x_{0}^{2} x_{15}^{2} - x_{0}^{2} x_{15} x_{18} + \tfrac{1}{2} x_{0}^{2} x_{16}^{2} \\
  & - x_{0}^{2} x_{16} x_{21} + \tfrac{1}{2} x_{0}^{2} x_{17}^{2} + x_{0}^{2} x_{17} x_{20} + \tfrac{1}{2} x_{0}^{2} x_{18}^{2} + \tfrac{1}{2} x_{0}^{2} x_{19}^{2} + \tfrac{1}{2} x_{0}^{2} x_{20}^{2} + \tfrac{1}{2} x_{0}^{2} x_{21}^{2} \\
  & + (\tfrac{9}{2} r - 2) x_{0} x_{1} x_{11} x_{12} + 2 x_{0} x_{1} x_{14} x_{20} + 2 x_{0} x_{1} x_{15} x_{21} - 2 x_{0} x_{1} x_{16} x_{18} \\ 
  & - 2 x_{0} x_{1} x_{17} x_{19} -\tfrac{3}{2} r x_{0} x_{2} x_{11} x_{13} - x_{0} x_{3} x_{11} x_{14} - x_{0} x_{3} x_{11} x_{19} \\ 
  & + (\tfrac{9}{2} r - 3) x_{0} x_{3} x_{12} x_{17} - x_{0} x_{3} x_{12} x_{20} - x_{0} x_{4} x_{11} x_{15} + x_{0} x_{4} x_{11} x_{18} \\
  & + (-\tfrac{9}{2} r + 3) x_{0} x_{4} x_{12} x_{16} - x_{0} x_{4} x_{12} x_{21} - x_{0} x_{5} x_{11} x_{16} + x_{0} x_{5} x_{11} x_{21} \\
  & + (\tfrac{9}{2} r - 3) x_{0} x_{5} x_{12} x_{15} + x_{0} x_{5} x_{12} x_{18} - x_{0} x_{6} x_{11} x_{17} - x_{0} x_{6} x_{11} x_{20} \\
  & + (-\tfrac{9}{2} r + 3) x_{0} x_{6} x_{12} x_{14} + x_{0} x_{6} x_{12} x_{19} + x_{0} x_{7} x_{11} x_{15} - x_{0} x_{7} x_{11} x_{18} + x_{0} x_{7} x_{12} x_{16} \\
  & + (\tfrac{9}{2} r - 3) x_{0} x_{7} x_{12} x_{21} - x_{0} x_{8} x_{11} x_{14} - x_{0} x_{8} x_{11} x_{19} + x_{0} x_{8} x_{12} x_{17} \\
  & + (-\tfrac{9}{2} r + 3) x_{0} x_{8} x_{12} x_{20} - x_{0} x_{9} x_{11} x_{17} - x_{0} x_{9} x_{11} x_{20} - x_{0} x_{9} x_{12} x_{14} \\
  & + (\tfrac{9}{2} r - 3) x_{0} x_{9} x_{12} x_{19} + x_{0} x_{10} x_{11} x_{16} - x_{0} x_{10} x_{11} x_{21} - x_{0} x_{10} x_{12} x_{15} \\ 
  & + (-\tfrac{9}{2} r + 3) x_{0} x_{10} x_{12} x_{18} + (-\tfrac{9}{4} r + 1) x_{1}^{2} x_{11}^{2} + \tfrac{3}{4} r x_{1}^{2} x_{13}^{2} + \tfrac{1}{2} x_{1}^{2} x_{14}^{2} - x_{1}^{2} x_{14} x_{19} \\ 
  & + \tfrac{1}{2} x_{1}^{2} x_{15}^{2} + x_{1}^{2} x_{15} x_{18} + \tfrac{1}{2} x_{1}^{2} x_{16}^{2} + x_{1}^{2} x_{16} x_{21} + \tfrac{1}{2} x_{1}^{2} x_{17}^{2} - x_{1}^{2} x_{17} x_{20} + \tfrac{1}{2} x_{1}^{2} x_{18}^{2} \\ 
  & + \tfrac{1}{2} x_{1}^{2} x_{19}^{2} + \tfrac{1}{2} x_{1}^{2} x_{20}^{2} + \tfrac{1}{2} x_{1}^{2} x_{21}^{2} -\tfrac{3}{2} r x_{1} x_{2} x_{12} x_{13} + (-\tfrac{9}{2} r + 3) x_{1} x_{3} x_{11} x_{17} \\ 
  & - x_{1} x_{3} x_{11} x_{20} - x_{1} x_{3} x_{12} x_{14} + x_{1} x_{3} x_{12} x_{19} + (\tfrac{9}{2} r - 3) x_{1} x_{4} x_{11} x_{16} - x_{1} x_{4} x_{11} x_{21} \\ 
  & - x_{1} x_{4} x_{12} x_{15} - x_{1} x_{4} x_{12} x_{18} + (-\tfrac{9}{2} r + 3) x_{1} x_{5} x_{11} x_{15} + x_{1} x_{5} x_{11} x_{18} \\ 
  & - x_{1} x_{5} x_{12} x_{16} - x_{1} x_{5} x_{12} x_{21} + (\tfrac{9}{2} r - 3) x_{1} x_{6} x_{11} x_{14} + x_{1} x_{6} x_{11} x_{19} \\
  & - x_{1} x_{6} x_{12} x_{17} + x_{1} x_{6} x_{12} x_{20} + x_{1} x_{7} x_{11} x_{16} + (-\tfrac{9}{2} r + 3) x_{1} x_{7} x_{11} x_{21} \\
  & - x_{1} x_{7} x_{12} x_{15} - x_{1} x_{7} x_{12} x_{18} + x_{1} x_{8} x_{11} x_{17} + (\tfrac{9}{2} r - 3) x_{1} x_{8} x_{11} x_{20} \\
  & + x_{1} x_{8} x_{12} x_{14} - x_{1} x_{8} x_{12} x_{19} - x_{1} x_{9} x_{11} x_{14} + (-\tfrac{9}{2} r + 3) x_{1} x_{9} x_{11} x_{19} + x_{1} x_{9} x_{12} x_{17} \\
  & - x_{1} x_{9} x_{12} x_{20} - x_{1} x_{10} x_{11} x_{15} + (\tfrac{9}{2} r - 3) x_{1} x_{10} x_{11} x_{18} - x_{1} x_{10} x_{12} x_{16} - x_{1} x_{10} x_{12} x_{21} \\
  & + \tfrac{3}{4} r x_{2}^{2} x_{11}^{2} + \tfrac{3}{4} r x_{2}^{2} x_{12}^{2} + \tfrac{3}{4} r x_{2}^{2} x_{14}^{2} + \tfrac{3}{4} r x_{2}^{2} x_{15}^{2} + \tfrac{3}{4} r x_{2}^{2} x_{16}^{2} + \tfrac{3}{4} r x_{2}^{2} x_{17}^{2} + \tfrac{3}{4} r x_{2}^{2} x_{18}^{2} \\
  & + \tfrac{3}{4} r x_{2}^{2} x_{19}^{2} + \tfrac{3}{4} r x_{2}^{2} x_{20}^{2} + \tfrac{3}{4} r x_{2}^{2} x_{21}^{2} -\tfrac{3}{2} r x_{2} x_{3} x_{13} x_{14} -\tfrac{3}{2} r x_{2} x_{4} x_{13} x_{15} -\tfrac{3}{2} r x_{2} x_{5} x_{13} x_{16} \\
  & -\tfrac{3}{2} r x_{2} x_{6} x_{13} x_{17} -\tfrac{3}{2} r x_{2} x_{7} x_{13} x_{18} -\tfrac{3}{2} r x_{2} x_{8} x_{13} x_{19} -\tfrac{3}{2} r x_{2} x_{9} x_{13} x_{20} -\tfrac{3}{2} r x_{2} x_{10} x_{13} x_{21} \\
  & + \tfrac{1}{2} x_{3}^{2} x_{11}^{2} + \tfrac{1}{2} x_{3}^{2} x_{12}^{2} + \tfrac{3}{4} r x_{3}^{2} x_{13}^{2} + \tfrac{1}{2} x_{3}^{2} x_{15}^{2} + \tfrac{1}{2} x_{3}^{2} x_{16}^{2} + (-\tfrac{9}{4} r + 2) x_{3}^{2} x_{17}^{2} + \tfrac{1}{2} x_{3}^{2} x_{18}^{2} \\ 
  & + \tfrac{1}{2} x_{3}^{2} x_{21}^{2} - x_{3} x_{4} x_{14} x_{15} + (\tfrac{9}{2} r - 3) x_{3} x_{4} x_{16} x_{17} + x_{3} x_{4} x_{18} x_{19} - x_{3} x_{4} x_{20} x_{21} \\
  & - x_{3} x_{5} x_{14} x_{16} + (-\tfrac{9}{2} r + 3) x_{3} x_{5} x_{15} x_{17} + x_{3} x_{5} x_{18} x_{20} + x_{3} x_{5} x_{19} x_{21} \\
  & + (\tfrac{9}{2} r - 4) x_{3} x_{6} x_{14} x_{17} - x_{3} x_{7} x_{14} x_{18} + x_{3} x_{7} x_{15} x_{19} + x_{3} x_{7} x_{16} x_{20} \\
  &  + (-\tfrac{9}{2} r + 3) x_{3} x_{7} x_{17} x_{21} + x_{3} x_{8} x_{11}^{2} - x_{3} x_{8} x_{12}^{2} - 2 x_{3} x_{8} x_{15} x_{18} - 2 x_{3} x_{8} x_{16} x_{21} \\
  & + \tfrac{9}{2} r x_{3} x_{8} x_{17} x_{20} + 2 x_{3} x_{9} x_{11} x_{12} + 2 x_{3} x_{9} x_{15} x_{21} - 2 x_{3} x_{9} x_{16} x_{18} -\tfrac{9}{2} r x_{3} x_{9} x_{17} x_{19} \\
  & - x_{3} x_{10} x_{14} x_{21} - x_{3} x_{10} x_{15} x_{20} + x_{3} x_{10} x_{16} x_{19} + (\tfrac{9}{2} r - 3) x_{3} x_{10} x_{17} x_{18} + \tfrac{1}{2} x_{4}^{2} x_{11}^{2} \\
  & + \tfrac{1}{2} x_{4}^{2} x_{12}^{2} + \tfrac{3}{4} r x_{4}^{2} x_{13}^{2} + \tfrac{1}{2} x_{4}^{2} x_{14}^{2} + (-\tfrac{9}{4} r + 2) x_{4}^{2} x_{16}^{2} + \tfrac{1}{2} x_{4}^{2} x_{17}^{2} + \tfrac{1}{2} x_{4}^{2} x_{19}^{2} + \tfrac{1}{2} x_{4}^{2} x_{20}^{2} \\
  & + (\tfrac{9}{2} r - 4) x_{4} x_{5} x_{15} x_{16} + (-\tfrac{9}{2} r + 3) x_{4} x_{6} x_{14} x_{16} - x_{4} x_{6} x_{15} x_{17} + x_{4} x_{6} x_{18} x_{20} \\
  & + x_{4} x_{6} x_{19} x_{21} - x_{4} x_{7} x_{11}^{2} + x_{4} x_{7} x_{12}^{2} - 2 x_{4} x_{7} x_{14} x_{19} + \tfrac{9}{2} r x_{4} x_{7} x_{16} x_{21} \\
  & - 2 x_{4} x_{7} x_{17} x_{20} + x_{4} x_{8} x_{14} x_{18} - x_{4} x_{8} x_{15} x_{19} + (-\tfrac{9}{2} r + 3) x_{4} x_{8} x_{16} x_{20} + x_{4} x_{8} x_{17} x_{21} \\
  & - x_{4} x_{9} x_{14} x_{21} - x_{4} x_{9} x_{15} x_{20} + (\tfrac{9}{2} r - 3) x_{4} x_{9} x_{16} x_{19} + x_{4} x_{9} x_{17} x_{18} + 2 x_{4} x_{10} x_{11} x_{12} \\ 
  & + 2 x_{4} x_{10} x_{14} x_{20} -\tfrac{9}{2} r x_{4} x_{10} x_{16} x_{18} - 2 x_{4} x_{10} x_{17} x_{19} + \tfrac{1}{2} x_{5}^{2} x_{11}^{2} + \tfrac{1}{2} x_{5}^{2} x_{12}^{2} + \tfrac{3}{4} r x_{5}^{2} x_{13}^{2} \\
  & + \tfrac{1}{2} x_{5}^{2} x_{14}^{2} + (-\tfrac{9}{4} r + 2) x_{5}^{2} x_{15}^{2} + \tfrac{1}{2} x_{5}^{2} x_{17}^{2} + \tfrac{1}{2} x_{5}^{2} x_{19}^{2} + \tfrac{1}{2} x_{5}^{2} x_{20}^{2} + (\tfrac{9}{2} r - 3) x_{5} x_{6} x_{14} x_{15} \\ 
  & - x_{5} x_{6} x_{16} x_{17} - x_{5} x_{6} x_{18} x_{19} + x_{5} x_{6} x_{20} x_{21} - 2 x_{5} x_{7} x_{11} x_{12} - 2 x_{5} x_{7} x_{14} x_{20} \\
  & -\tfrac{9}{2} r x_{5} x_{7} x_{15} x_{21} + 2 x_{5} x_{7} x_{17} x_{19} + x_{5} x_{8} x_{14} x_{21} + (\tfrac{9}{2} r - 3) x_{5} x_{8} x_{15} x_{20} \\ 
  & - x_{5} x_{8} x_{16} x_{19} - x_{5} x_{8} x_{17} x_{18} + x_{5} x_{9} x_{14} x_{18} + (-\tfrac{9}{2} r + 3) x_{5} x_{9} x_{15} x_{19} - x_{5} x_{9} x_{16} x_{20} \\ 
  & + x_{5} x_{9} x_{17} x_{21} - x_{5} x_{10} x_{11}^{2} + x_{5} x_{10} x_{12}^{2} - 2 x_{5} x_{10} x_{14} x_{19} + \tfrac{9}{2} r x_{5} x_{10} x_{15} x_{18} \\
  & - 2 x_{5} x_{10} x_{17} x_{20} + \tfrac{1}{2} x_{6}^{2} x_{11}^{2} + \tfrac{1}{2} x_{6}^{2} x_{12}^{2} + \tfrac{3}{4} r x_{6}^{2} x_{13}^{2} + (-\tfrac{9}{4} r + 2) x_{6}^{2} x_{14}^{2} + \tfrac{1}{2} x_{6}^{2} x_{15}^{2} \\
  & + \tfrac{1}{2} x_{6}^{2} x_{16}^{2} + \tfrac{1}{2} x_{6}^{2} x_{18}^{2} + \tfrac{1}{2} x_{6}^{2} x_{21}^{2} + (\tfrac{9}{2} r - 3) x_{6} x_{7} x_{14} x_{21} + x_{6} x_{7} x_{15} x_{20} - x_{6} x_{7} x_{16} x_{19} \\
  & - x_{6} x_{7} x_{17} x_{18} - 2 x_{6} x_{8} x_{11} x_{12} -\tfrac{9}{2} r x_{6} x_{8} x_{14} x_{20} - 2 x_{6} x_{8} x_{15} x_{21} + 2 x_{6} x_{8} x_{16} x_{18} \\
  & + x_{6} x_{9} x_{11}^{2} - x_{6} x_{9} x_{12}^{2} + \tfrac{9}{2} r x_{6} x_{9} x_{14} x_{19} - 2 x_{6} x_{9} x_{15} x_{18} - 2 x_{6} x_{9} x_{16} x_{21} \\ 
  & + (-\tfrac{9}{2} r + 3) x_{6} x_{10} x_{14} x_{18} + x_{6} x_{10} x_{15} x_{19} + x_{6} x_{10} x_{16} x_{20} - x_{6} x_{10} x_{17} x_{21} + \tfrac{1}{2} x_{7}^{2} x_{11}^{2} \\
  & + \tfrac{1}{2} x_{7}^{2} x_{12}^{2} + \tfrac{3}{4} r x_{7}^{2} x_{13}^{2} + \tfrac{1}{2} x_{7}^{2} x_{14}^{2} + \tfrac{1}{2} x_{7}^{2} x_{17}^{2} + \tfrac{1}{2} x_{7}^{2} x_{19}^{2} + \tfrac{1}{2} x_{7}^{2} x_{20}^{2} + (-\tfrac{9}{4} r + 2) x_{7}^{2} x_{21}^{2} \\
  & + x_{7} x_{8} x_{14} x_{15} - x_{7} x_{8} x_{16} x_{17} - x_{7} x_{8} x_{18} x_{19} + (\tfrac{9}{2} r - 3) x_{7} x_{8} x_{20} x_{21} + x_{7} x_{9} x_{14} x_{16} \\
  & + x_{7} x_{9} x_{15} x_{17} - x_{7} x_{9} x_{18} x_{20} + (-\tfrac{9}{2} r + 3) x_{7} x_{9} x_{19} x_{21} + (\tfrac{9}{2} r - 4) x_{7} x_{10} x_{18} x_{21} \\
  & + \tfrac{1}{2} x_{8}^{2} x_{11}^{2} + \tfrac{1}{2} x_{8}^{2} x_{12}^{2} + \tfrac{3}{4} r x_{8}^{2} x_{13}^{2} + \tfrac{1}{2} x_{8}^{2} x_{15}^{2} + \tfrac{1}{2} x_{8}^{2} x_{16}^{2} + \tfrac{1}{2} x_{8}^{2} x_{18}^{2} + (-\tfrac{9}{4} r + 2) x_{8}^{2} x_{20}^{2} \\
  & + \tfrac{1}{2} x_{8}^{2} x_{21}^{2} + (\tfrac{9}{2} r - 4) x_{8} x_{9} x_{19} x_{20} + x_{8} x_{10} x_{14} x_{16} + x_{8} x_{10} x_{15} x_{17} \\
  & + (-\tfrac{9}{2} r + 3) x_{8} x_{10} x_{18} x_{20} - x_{8} x_{10} x_{19} x_{21} + \tfrac{1}{2} x_{9}^{2} x_{11}^{2} + \tfrac{1}{2} x_{9}^{2} x_{12}^{2} + \tfrac{3}{4} r x_{9}^{2} x_{13}^{2} + \tfrac{1}{2} x_{9}^{2} x_{15}^{2} \\
  & + \tfrac{1}{2} x_{9}^{2} x_{16}^{2} + \tfrac{1}{2} x_{9}^{2} x_{18}^{2} + (-\tfrac{9}{4} r + 2) x_{9}^{2} x_{19}^{2} + \tfrac{1}{2} x_{9}^{2} x_{21}^{2} - x_{9} x_{10} x_{14} x_{15} + x_{9} x_{10} x_{16} x_{17} \\
  & + (\tfrac{9}{2} r - 3) x_{9} x_{10} x_{18} x_{19} - x_{9} x_{10} x_{20} x_{21} + \tfrac{1}{2} x_{10}^{2} x_{11}^{2} + \tfrac{1}{2} x_{10}^{2} x_{12}^{2} + \tfrac{3}{4} r x_{10}^{2} x_{13}^{2} + \tfrac{1}{2} x_{10}^{2} x_{14}^{2} \\
  & + \tfrac{1}{2} x_{10}^{2} x_{17}^{2} + (-\tfrac{9}{4} r + 2) x_{10}^{2} x_{18}^{2} + \tfrac{1}{2} x_{10}^{2} x_{19}^{2} + \tfrac{1}{2} x_{10}^{2} x_{20}^{2}.
\end{align*}
}

\bibliography{/home/silvio/Zotero/biblio.bib}

\providecommand{\bysame}{\leavevmode\hbox to3em{\hrulefill}\thinspace}
\providecommand{\MR}{\relax\ifhmode\unskip\space\fi MR }
\providecommand{\MRhref}[2]{%
  \href{http://www.ams.org/mathscinet-getitem?mr=#1}{#2}
}
\providecommand{\href}[2]{#2}
\begin{thebibliography}{ORT14}

\bibitem[Arv03]{arvanitogeorgosIntroductionLieGroups2003}
A.~Arvanitogeorgos, \emph{An introduction to {{Lie}} groups and the geometry of homogeneous spaces}, Student Mathematical Library, no. v. 22, American Mathematical Society, Providence, RI, 2003.

\bibitem[BDI08]{bissLargeAnnihilatorsCayley2008}
D.~K. Biss, D.~Dugger, and D.~C. Isaksen, \emph{Large annihilators in {{Cayley}}--{{Dickson}} algebras}, Comm. Algebra \textbf{36} (2008), no.~2, 632--664.

\bibitem[BH87]{backEquivariantGeometryKervaire1987}
A.~Back and W.~Y. Hsiang, \emph{Equivariant geometry and {{Kervaire}} spheres}, Trans. Amer. Math. Soc. \textbf{304} (1987), no.~1, 207--227.

\bibitem[BLS20]{burnessLengthDepthCompact2020}
T.~C. Burness, M.~W. Liebeck, and A.~Shalev, \emph{The length and depth of compact {{Lie}} groups}, Math. Z. \textbf{294} (2020), no.~3-4, 1457--1476.

\bibitem[CLR95]{choiSumsSquaresReal1995}
M.~D. Choi, T.~Y. Lam, and B.~Reznick, \emph{Sums of squares of real polynomials}, K-Theory and Algebraic Geometry: Connections with Quadratic Forms and Division Algebras. {{Summer Research Institute}} on Quadratic Forms and Division Algebras, {{July}} 6-24, 1992, {{University}} of {{California}}, {{Santa Barbara}}, {{CA}} ({{USA}}), Providence, RI: American Mathematical Society, 1995, pp.~103--126.

\bibitem[DZ79]{datriNaturallyReductiveMetrics1979}
J.~E. D'Atri and W.~Ziller, \emph{Naturally reductive metrics and {{Einstein}} metrics on compact {{Lie}} groups}, Memoirs of the {{American Mathematical Society}}, no. 215, American Mathematical Society, Providence, RI, 1979.

\bibitem[GG19]{gillardThreeFermionGenerations2019}
A.~B. Gillard and N.~G. Gresnigt, \emph{Three fermion generations with two unbroken gauge symmetries from the complex sedenions}, Eur. Phys. J. C \textbf{79} (2019), no.~5, 11 p.

\bibitem[Jen73]{jensenEinsteinMetricsPrincipal1973}
G.~R. Jensen, \emph{Einstein metrics on principal fibre bundles}, J. Differential Geom. \textbf{8} (1973), no.~4, 599--614.

\bibitem[Ker98]{kerrNewExamplesHomogeneous1998}
M.~M. Kerr, \emph{New examples of homogeneous {{Einstein}} metrics}, Michigan Math. J. \textbf{45} (1998), no.~1, 115--134.

\bibitem[Mor98]{morenoZeroDivisorsCayleyDickson1998}
G.~Moreno, \emph{The zero divisors of the {{Cayley-Dickson}} algebras over the real numbers}, Bol. Soc. Mat. Mex., III. Ser. \textbf{4} (1998), no.~1, 13--28.

\bibitem[Oni92]{onishchikGroupIsometriesCompact1992}
A.~L. Onishchik, \emph{The group of isometries of a compact {{Riemannian}} homogeneous space}, Differential Geometry and Its Applications. {{Proceedings}} of a Colloquium, Held in {{Eger}}, {{Hungary}}, {{August}} 20-25, 1989, Organized by the {{J{\'a}nos Bolyai Mathematical Society}}, Amsterdam: North-Holland Publishing Company; Budapest: J{\'a}nos Bolyai Mathematical Society, 1992, pp.~597--616.

\bibitem[OR13]{olmosNoteUniquenessCanonical2013}
C.~Olmos and S.~Reggiani, \emph{A note on the uniqueness of the canonical connection of a naturally reductive space}, Monatsh. Math. \textbf{172} (2013), no.~3-4, 379--386.

\bibitem[ORT14]{olmosIndexSymmetryCompact2014}
C.~Olmos, S.~Reggiani, and H.~Tamaru, \emph{The index of symmetry of compact naturally reductive spaces}, Math. Z. \textbf{277} (2014), no.~3-4, 611--628.

\bibitem[PW98]{powersAlgorithmSumsSquares1998}
V.~Powers and T.~W{\"o}rmann, \emph{An algorithm for sums of squares of real polynomials}, J. Pure Appl. Algebra \textbf{127} (1998), no.~1, 99--104.

\bibitem[Reg10]{reggianiAffineGroupNormal2010}
S.~Reggiani, \emph{On the affine group of a normal homogeneous manifold}, Ann. Global Anal. Geom. \textbf{37} (2010), no.~4, 351--359.

\bibitem[Reg24]{reggianiAuxiliaryComputationsArticle2024}
\bysame, \emph{Auxiliary computations for the article {{The Geometry}} of {{Sedenion Zero Divisors}}}, https://github.com/silvioreggiani/sedenion-zero-div, 2024.

\bibitem[SA20]{saoudMetacognitiveSedenionValuedNeural2020}
L.~S. Saoud and H.~{Al-Marzouqi}, \emph{Metacognitive {{Sedenion-Valued Neural Network}} and its {{Learning Algorithm}}}, IEEE Access \textbf{8} (2020), 144823--144838.

\bibitem[Sag70]{sagleHomogeneousEinsteinManifolds1970}
A.~A. Sagle, \emph{Some {{Homogeneous Einstein Manifolds}}}, Nagoya Math. J. \textbf{39} (1970), 81--106.

\bibitem[Zil07]{zillerExamplesRiemannianManifolds2007}
W.~Ziller, \emph{Examples of {{Riemannian}} manifolds with non-negative sectional curvature}, Metric and Comparison Geometry. {{Surveys}} in Differential Geometry. {{Vol}}. {{XI}}., Somerville, MA: International Press, 2007, pp.~63--102.

\end{thebibliography}
\bibliographystyle{amsalpha}

\end{document}